\newtheorem{theorem}{Theorem}[section]
\newtheorem{lemma}[theorem]{Lemma}
\newtheorem{proposition}[theorem]{Proposition}
\newtheorem{corollary}[theorem]{Corollary}
\theoremstyle{definition}
\newtheorem{definition}[theorem]{Definition}
\theoremstyle{remark}
\newtheorem{remark}[theorem]{Remark}
\newtheorem{claim}[theorem]{Claim}
\newcommand{\omu}{\overline{\mu}}
\newcommand{\e}{\varepsilon}
\newcommand{\de}{\delta}
\begin{document}
\title[Classification of string links up to $2n$-moves and link-homotopy]{Classification of string links\\ up to $2n$-moves and link-homotopy}

\author[Haruko A. Miyazawa]{Haruko A. Miyazawa}
\address{Institute for Mathematics and Computer Science, Tsuda University,
2-1-1 Tsuda-Machi, Kodaira, Tokyo, 187-8577, Japan}
\curraddr{}
\email{aida@tsuda.ac.jp}
\thanks{}

\author[Kodai Wada]{Kodai Wada}
\address{Faculty of Education and Integrated Arts and Sciences, Waseda University, 1-6-1 Nishi-Waseda, Shinjuku-ku, Tokyo, 169-8050, Japan}
\curraddr{}
\email{k.wada8@kurenai.waseda.jp}
\thanks{The second author was supported by a Grant-in-Aid for JSPS Research Fellow (\#17J08186) of the Japan Society for the Promotion of Science.}

\author[Akira Yasuhara]{Akira Yasuhara}
\address{Faculty of Commerce, Waseda University, 1-6-1 Nishi-Waseda, Shinjuku-ku, Tokyo, 169-8050, Japan}
\curraddr{}
\email{yasuhara@waseda.jp}
\thanks{The third author was partially supported by a Grant-in-Aid for Scientific Research (C) (\#17K05264) of the Japan Society for the Promotion of Science and a Waseda University Grant for Special Research Projects (\#2018S-077).}

\subjclass[2010]{57M25, 57M27}

\keywords{Links, string links, Milnor invariants, $2n$-moves, link-homotopy, Fox's congruence classes, claspers.}




\begin{abstract}
Two string links are equivalent up to $2n$-moves and link-homotopy if and only if their all Milnor link-homotopy invariants are congruent modulo~$n$. 
Moreover, the set of the equivalence classes forms a finite group generated by elements of order~$n$. 
The classification induces that if two string links are equivalent up to $2n$-moves for every $n>0$, 
then they are link-homotopic. 
\end{abstract}

\maketitle

\section{Introduction} 
In 1950s, J.~Milnor~\cite{M54,M57} defined a family of link invariants, known as {\em Milnor $\omu$-invariants}. 
For an ordered oriented $m$-component link $L$ in the $3$-sphere $S^3$, the {\em Milnor number $\mu_{L}(I)$} $(\in\mathbb{Z})$ of $L$ is specified by a finite sequence $I$ of elements in $\{1,\ldots,m\}$. 
This number is only well-defined up to a certain indeterminacy $\Delta_L(I)$, 
i.e. the residue class $\omu_{L}(I)$ of $\mu_{L}(I)$ modulo $\Delta_L(I)$ is a link invariant. 
The invariant $\omu_{L}(ij)$ for a sequence $ij$ is just the linking number between the $i$th and $j$th component of $L$. 
This justifies regarding $\omu$-invariants  as ``generalized linking numbers''. 

In~\cite{HL} N.~Habegger and X.-S.~Lin defined Milnor numbers for {\em string links} and  
proved that Milnor numbers are well-defined invariants without taking modulo. 
These numbers are called {\em Milnor $\mu$-invariants}. 
It is remarkable that $\mu$-invariants for non-repeated sequences classify string links up to link-homotopy~\cite{HL} (whereas $\omu$-invariants are not enough strong to classify links with four or more components up to link-homotopy~\cite{L}). 
Here the {\em link-homotopy}, introduced by Milnor in~\cite{M54}, is the equivalence relation on (string) links 
generated by self-crossing changes and ambient isotopies.
In addition to link-homotopy, there are various ``geometric'' ~equivalence relations on (string) links that
are related to Milnor invariants, 
e.g. concordance~\cite{S,Casson}, (self) $C_{k}$-equivalence~\cite{H,FY,Yagt,Ytrans,MY10} and Whitney tower concordance~\cite{CST1,CST2,CST3},~etc.

A {\em $2n$-move} is a local move illustrated in Figure~\ref{n-move}, and the {\em $2n$-move equivalence} is the equivalence relation generated by $2n$-moves and ambient isotopies. 
The $2n$-moves were probably first studied by S.~Kinoshita in 1957~\cite{K57}. 
It is known that several $2n$-move equivalence invariants are derived from polynomial invariants, 
the Alexander~\cite{K80}, Jones, Kauffman and HOMFLYPT polynomials~\cite{P}. 
Besides polynomial invariants, Fox colorings and Burnside groups give $2n$-move equivalence invariants~\cite{DP02,DP04}.

\begin{figure}[htbp]
  \begin{center}
    \begin{overpic}[width=11cm]{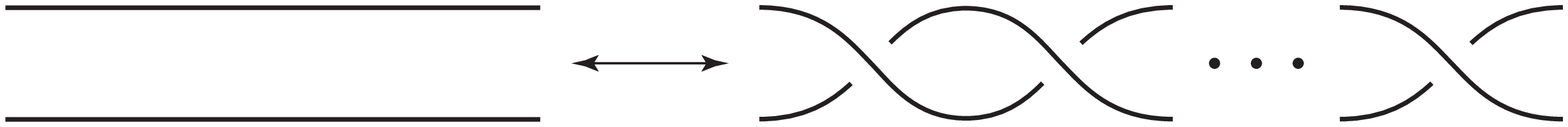}
      \put(172,-2){{\footnotesize $1$}}
      \put(210.5,-2){{\footnotesize $2$}}
      \put(287,-2){{\footnotesize $2n$}}
    \end{overpic}
  \end{center}
  \caption{$2n$-move}
  \label{n-move}
\end{figure}

Both Milnor invariants and $2n$-moves are well-studied in Knot Theory. 
However, to the best of the authors' knowledge, there are no research articles relating Milnor invariants and $2n$-moves 
(except for the easily observed fact that the linking numbers modulo $n$ are $2n$-move equivalence invariants). 
In this paper, we show the following theorem that establishes an unexpected relationship between 
Milnor link-homotopy invariants and $2n$-moves.

\begin{theorem}\label{th-sl}
Let $n$ be a positive integer. 
Two string links $\sigma$ and $\sigma'$ are $(2n+{\rm lh})$-equivalent if and only if 
$\mu_{\sigma}(I)\equiv\mu_{\sigma'}(I)\pmod{n}$ for any non-repeated sequence~$I$. 
\end{theorem}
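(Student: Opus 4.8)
The plan is to prove the two implications separately, using the Habegger--Lin classification to identify the $\mu_\sigma(I)$ for non-repeated $I$ as a complete set of link-homotopy invariants, and to work throughout in the group of link-homotopy classes of string links under stacking (concatenation).

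For the necessity (``only if'') direction, I would first observe that link-homotopy preserves each $\mu_\sigma(I)$ exactly, so the whole content is to control a single $2n$-move. The claim is that a $2n$-move alters every $\mu_\sigma(I)$ by a multiple of $n$. I would realize the local $2n$-move as an $n$-fold stacking $T^n$ of the elementary full-twist tangle $T$, and then expand the effect of inserting $T^n$ through the Magnus expansion underlying the Milnor invariants. Because the leading contribution of each copy of $T$ is additive, the resulting change in $\mu_\sigma(I)$ is $n$ times a fixed integer plus contributions that are themselves visibly divisible by $n$, hence $\equiv 0\pmod n$. For $|I|=2$ this simply recovers the elementary fact about linking numbers quoted in the introduction.

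For the sufficiency (``if'') direction, I would set $\tau=\sigma\cdot(\sigma')^{-1}$ in the link-homotopy group; the hypothesis says every $\mu_\tau(I)\equiv 0\pmod n$, and it suffices to show that such a $\tau$ is $(2n+\mathrm{lh})$-equivalent to the trivial string link. I would argue by induction on the length $k=|I|$, i.e. on the Milnor filtration / lower central series degree. The graded piece at length $k$ is a free abelian group with a basis indexed by basic non-repeated sequences, each basis element realized by an explicit degree-$k$ tree clasper. The key step is to exhibit, for each such generator, a single $2n$-move performed along a suitably routed band threading the relevant components that changes exactly that length-$k$ invariant by $n$ while contributing nothing to shorter sequences. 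Since the leading nonzero invariants of $\tau$ are divisible by $n$, finitely many such moves cancel them; the induction runs bottom-up from $k=2$, terminates at $k=m$ because non-repeated sequences on $m$ strands have bounded length, and at each stage the correction terms introduced by a degree-$k$ clasper affect only longer sequences, which are handled subsequently.

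The main obstacle I expect is precisely this inductive realization in the sufficiency direction: a $2n$-move is intrinsically a two-strand operation, whereas the higher $\mu$-invariants encode iterated commutators across many components. The technical heart is therefore a clasper-calculus lemma asserting that a $2n$-move along a band that passes through several strands equals, up to link-homotopy and higher-degree corrections, exactly $n$ parallel copies of a prescribed degree-$k$ tree clasper. Establishing this identification, and bookkeeping the higher-degree corrections so that cancelling the length-$k$ invariants never disturbs the already-arranged shorter ones, is where the delicate work lies; the final identification of the equivalence classes with a finite group generated by order-$n$ elements should then follow from the saturation of the Milnor invariant lattice and the fact that $2n$-moves generate precisely $n$ times each graded piece.
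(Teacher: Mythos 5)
Your sufficiency plan is close in spirit to the paper's, but the necessity half of your proposal contains a genuine error. You claim that a single $2n$-move alters \emph{every} $\mu_{\sigma}(I)$ by a multiple of $n$, the corrections being ``visibly divisible by $n$''; this is false, and the paper itself records a counterexample (Remark following Proposition~\ref{prop-inv-prime}): $\sigma=\sigma_{1}^{4}$ differs from $\mathbf{1}_{2}$ by a single $4$-move (so $n=2$), yet $\mu_{\sigma}(112)=1\not\equiv 0\pmod{2}$. The naive additivity fails because longitudes transform multiplicatively under insertion of the twist tangle: writing the Magnus expansions of the two conjugated meridians as $1+\e P(X_{i})+\mathcal{O}(2)$ and $1+\de Q(X_{k})+\mathcal{O}(2)$, the inserted $n$-fold twist expands as $\left(1+\e P+\de Q+\e\de PQ+\mathcal{O}(2)\right)^{n}$, and modulo $n$ the surviving cross term is $\binom{n}{2}\left(PQ+QP\right)$ --- and $\binom{n}{2}$ is \emph{not} divisible by $n$ when $n$ is even. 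The paper's Lemma~\ref{lem-invariance} isolates exactly this term, and invariance is rescued only for non-repeated sequences, by observations your sketch never makes: every term of $R=PQ+QP$ contains both $X_{i}$ and $X_{k}$, so inside the $i$th longitude it produces either repeated-variable terms or terms containing $X_{i}$, both invisible to $\mu(j_{1}\cdots j_{k}i)$ with $I$ non-repeated; and where the twist enters through a conjugating word, the paired insertions $\left(a'_{ih}a'_{kl}\right)^{n}$ and $\left(a'_{ih}a'_{kl}\right)^{-n}$ contribute $2\binom{n}{2}=n(n-1)\equiv 0\pmod{n}$ (Claim~\ref{eq-generator}). Since your argument nowhere uses non-repeatedness, it would prove invariance for all sequences, contradicting the $4$-move example; it cannot be repaired without this finer analysis.

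For the ``if'' direction you essentially rediscover the paper's route: the paper cites Yasuhara's explicit representatives $\sigma_{1}*\cdots*\sigma_{m-1}$ with $\sigma_{k}=\prod_{\pi\in\mathcal{F}_{k+1}}V_{\pi}^{x_{\pi}}$ (Theorem~\ref{th-representative}), and the ``clasper-calculus lemma'' you flag as the technical heart is precisely Lemma~\ref{lem-del-clasper}: a $2n$-move inserts a parallel $C_{1}$-tree of multiplicity $n$, and sliding a leaf of a cancelling $C_{k-1}$-tree pair over the $n$ parallel leaves yields exactly $n$ parallel copies of the prescribed $C_{k}$-tree (Lemmas~\ref{lem-cc} and~\ref{lem-sliding}), whence $V_{\pi}^{\pm n}$ is $(2n+{\rm lh})$-trivial. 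Note, however, that in the paper there are no higher-degree corrections to bookkeep: all such terms are killed outright by link-homotopy via the index argument (Lemma~\ref{lem-self}), so your worry about ``disturbing already-arranged shorter invariants'' dissolves, and your induction on length is replaced by simply reducing the exponents mod $n$ in the normal form (Proposition~\ref{prop-rep-2nlh}). One further small gap: your reduction to $\tau=\sigma*(\sigma')^{-1}$ with ``every $\mu_{\tau}(I)\equiv 0\pmod{n}$'' is not automatic, since Milnor invariants are not additive under stacking; it needs the product and inversion formulas modulo $n$, an induction of its own, whereas the paper avoids this entirely by comparing the normal forms of $\sigma$ and $\sigma'$ directly through the invariance theorem.
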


\noindent
Here, the {\em $(2n+{\rm lh})$-equivalence} is the equivalence relation generated by $2n$-moves, self-crossing changes and ambient isotopies. 
Note that ``$2n+{\rm lh}$'' stands for the combination of $2n$-move equivalence and link-homotopy.
In order to prove Theorem~\ref{th-sl}, we give  a complete list of representatives for string links up to $(2n+{\rm lh})$-equivalence (Proposition~\ref{prop-rep-2nlh}). 

Let $\mathcal{SL}(m)$ denote the set of $m$-component string links. 
Since the set of link-homotopy classes of $\mathcal{SL}(m)$ forms a group \cite{HL}, 
it is not hard to see that the set of $(2n+\rm{lh})$-equivalence classes  
is also a group. Moreover we have the following. 

\begin{corollary}\label{cor-group}
The set of $(2n+\rm{lh})$-equivalence classes of $\mathcal{SL}(m)$ forms a finite group generated by elements of order~$n$, 
and the order of the group is $n^{s_{m}}$, where $s_{m}=\sum_{r=2}^{m}(r-2)!\binom{m}{r}$. 
\end{corollary}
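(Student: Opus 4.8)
The plan is to realize the set $G$ of $(2n+\mathrm{lh})$-equivalence classes as a quotient of the link-homotopy group and then to read off its order and its generators from Theorem~\ref{th-sl} together with the list of representatives in Proposition~\ref{prop-rep-2nlh}. First I would fix the stacking (concatenation) product on $\mathcal{SL}(m)$. By~\cite{HL} the link-homotopy classes form a group, and since a $2n$-move is supported in a ball, performing it inside one factor of a product $\sigma\tau$ alters only that factor; hence $(2n+\mathrm{lh})$-equivalence is a congruence for stacking. Consequently $G$ is the quotient of the link-homotopy group by the normal subgroup generated by the classes $\sigma(\sigma')^{-1}$ of single $2n$-moves, and in particular $G$ is a group.

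Next I would count $|G|$. By Theorem~\ref{th-sl} the assignment $\sigma\mapsto(\mu_\sigma(I)\bmod n)_I$, with $I$ ranging over non-repeated sequences, is a well-defined injection on $G$. It is classical that the integers $\mu_\sigma(I)$ for non-repeated $I$ are subject only to the cyclic symmetry and the shuffle relations, and that for each $r$-element subset of $\{1,\dots,m\}$ these relations leave exactly $(r-2)!$ independent invariants of length $r$; summing over subsets gives $s_m=\sum_{r=2}^m(r-2)!\binom{m}{r}$ independent invariants. The realization theorem for Milnor invariants shows every assignment of values in $\mathbb{Z}/n$ to these $s_m$ invariants is attained, so the injection above is a bijection onto $(\mathbb{Z}/n)^{s_m}$ and $|G|=n^{s_m}$. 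Equivalently, one may simply count the $n^{s_m}$ distinct representatives furnished by Proposition~\ref{prop-rep-2nlh}.

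Finally I would exhibit generators of order $n$. For each of the $s_m$ basic non-repeated sequences $I$ let $g_I$ be a string link realizing $\mu(I)=1$ and all other basic invariants $0$; such $g_I$ exist by realization, and the representatives furnished by Proposition~\ref{prop-rep-2nlh} are products of the $g_I$, so the $g_I$ generate $G$. To see that $g_I$ has order exactly $n$ I would compute $\mu_{g_I^{\,k}}(J)$ for non-repeated $J$: since all copies of $g_I$ in the power share the same index support $\mathrm{supp}(I)$, every product term in the stacking formula for $\mu_{g_I^{\,k}}(J)$ that involves two or more copies repeats an index and hence cannot contribute to a non-repeated $J$; only the linear term survives, giving $\mu_{g_I^{\,k}}(J)=k\,\mu_{g_I}(J)$. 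Thus $\mu_{g_I^{\,n}}(J)\equiv0\pmod n$ for every non-repeated $J$, so $g_I^{\,n}$ is $(2n+\mathrm{lh})$-trivial by Theorem~\ref{th-sl}, while $\mu_{g_I^{\,k}}(I)=k\not\equiv0\pmod n$ for $0<k<n$ shows the order is no smaller.

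The main obstacle is this last step, and more precisely the failure of Milnor invariants to be additive under stacking. What forces the order to be exactly $n$ is the observation that for a single generator all factors share one index support, so the nonlinear product corrections are supported on repeated-index sequences and drop out of every non-repeated invariant; this same vanishing underlies the fact that the map sending an exponent vector $(e_I)$ to the class of $\prod_I g_I^{\,e_I}$ is unitriangular with respect to the length filtration, hence a bijection onto $G$. Verifying these vanishing statements carefully, via the precise stacking formula for $\mu$-invariants, is the technical heart of the argument.
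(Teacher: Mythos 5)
Your architecture is exactly the paper's: the group structure is observed as you observe it, the upper bound $n^{s_m}$ and the generating set come from Proposition~\ref{prop-rep-2nlh} (note $s_m=\sum_{r=2}^m|\mathcal{F}_r|$, since $\pi\in\mathcal{F}_r$ amounts to choosing an $r$-subset and ordering its $r-2$ smallest elements), the distinctness of the $n^{s_m}$ representatives comes from Theorem~\ref{th-sl} via the unitriangularity of Theorem~\ref{th-representative}, and the relations $V_\pi^{\pm n}\sim\mathbf{1}_m$ are Lemma~\ref{lem-del-clasper}. However, two of your supporting claims are wrong as stated. The lesser one: non-repeated $\mu$-invariants of \emph{string links} are not subject to cyclic symmetry (that is a relation for closed links, or for the first non-vanishing length); the count $(r-2)!\binom{m}{r}$ should instead be sourced from the Habegger--Lin rank computation or from the basis $\{V_\pi\}_{\pi\in\mathcal{F}_r}$ of Theorem~\ref{th-representative}. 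This is harmless, since, as you note, counting the representatives of Proposition~\ref{prop-rep-2nlh} replaces it.

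The more serious slip is your mechanism for $\mu_{g_I^{\,k}}(J)=k\,\mu_{g_I}(J)$: the claim that ``every product term involving two or more copies repeats an index'' is false. Two copies of $g_I$ can contribute \emph{disjoint} sub-monomials both supported in ${\rm supp}(I)$: for $|I|=4$, a cross term of the shape $\mu(j_1j_2\,\cdot)\,\mu(j_3j_4\,\cdot)$ with $j_1,j_2,j_3,j_4$ the four indices of $I$ passes your non-repetition test. What actually kills the corrections is different: since all basic invariants of $g_I$ other than $\mu_{g_I}(I)=1$ vanish, Habegger--Lin forces $g_I$ to be link-homotopic to $V_\pi$, which is obtained from $\mathbf{1}_m$ by surgery along a $C_{|I|-1}$-tree; hence all $\mu$-invariants of length $<|I|$ vanish (for $V_\pi$ and, inductively, for its powers) and the strands outside ${\rm supp}(I)$ are split. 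Every cross term in the stacking expansion of $\mu(J)$ is a product of invariants of length at least $2$ and at most $|J|-1$, so for $|J|\leq|I|$ each factor has length $<|I|$ and vanishes; and a non-repeated $J$ with $|J|>|I|$ meets a split component, where everything vanishes. With that fix your computation goes through, but note the paper avoids it altogether: Lemma~\ref{lem-del-clasper} trivializes $V_\pi^{\pm n}$ directly by clasper calculus (the $n$ parallel $C_1$-leaves are realized by a single $2n$-move), and the exactness of the order $n$ then falls out of the injectivity given by Theorem~\ref{th-sl} applied to the representatives --- the same unitriangularity you invoke at the end.
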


\noindent
The link-homotopy, concordance and $C_k$-equivalence give group structures on those equivalence classes of $\mathcal{SL}(m)$, 
respectively~\cite{HL, H}. 
The set of link-homotopy classes is a torsion free group of rank $s_m$ (see~\cite[Section 3]{HL}), and 
the concordance classes contain elements of order 2. 
It is still open if the concordance classes contain elements of order $\geq 3$ and 
if the $C_k$-equivalence classes have torsion elements. 
In contrast to these facts, Corollary~\ref{cor-group} implies that, for any integer $n\geq2$, 
the $(2n+\rm{lh})$-equivalence classes contain elements of order $n$. 
 
As a consequence of Theorem~\ref{th-sl}, we obtain a necessary and sufficient condition for which 
a link in $S^3$ is $(2n+\rm{lh})$-equivalent to the trivial link by means of Milnor {\em numbers}.

\begin{corollary}[Corollary~\ref{cor-trivial}]
Let $n$ be a positive integer.
An $m$-component link $L$ in $S^3$ is $(2n+{\rm lh})$-equivalent to the trivial link if and only if 
$\mu_L(I)\equiv0\pmod{n}$ for any non-repeated sequence~$I$. 
\end{corollary}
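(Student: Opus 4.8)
The plan is to deduce the statement from Theorem~\ref{th-sl} by passing back and forth between a link and a string link representing it, where the only genuine work is to reconcile the honest integers $\mu_\sigma(I)$ attached to a string link with the residue classes $\omu_L(I)$ attached to a link, the latter carrying Milnor's indeterminacy. First I would fix a string link $\sigma$ whose closure $\widehat{\sigma}$ is $L$, obtained by cutting each component of $L$ at one point. I recall that each $\mu_\sigma(I)$ is a well-defined integer lifting $\omu_L(I)$, and that $\omu_L(I)$ is defined only modulo the indeterminacy $\Delta_L(I)$, the greatest common divisor of the Milnor numbers $\mu_L(J)$ over the proper subsequences $J$ of $I$ (delete at least one index and permute cyclically). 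The point to exploit throughout is that when $I$ is non-repeated, every such $J$ is again non-repeated and strictly shorter than $I$.

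The heart of the argument is a comparison, by induction on the length of $I$, of the two conditions
\[
\text{(A)}\quad \mu_\sigma(I)\equiv 0 \pmod n \qquad\text{and}\qquad \text{(B)}\quad \mu_L(I)\equiv 0\pmod n
\]
ranging over all non-repeated $I$. For length~$2$ there is nothing to prove, since $\mu_\sigma(ij)=\omu_L(ij)$ is the linking number and $\Delta_L(ij)=0$. For the inductive step I would assume the two conditions agree for all shorter non-repeated sequences; the vanishing modulo $n$ of all lower Milnor numbers, together with their indeterminacies, then forces $n\mid\Delta_L(I)$, so the reduction of $\omu_L(I)$ modulo $n$ is well-defined, is independent of the chosen cutting $\sigma$, and equals $\mu_\sigma(I)\bmod n$. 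This simultaneously shows that (B) is a genuine invariant of $L$ (correctly interpreting ``$\mu_L(I)\equiv 0\pmod n$'' through the indeterminacy) and that (A)$\Leftrightarrow$(B).

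For the ``if'' direction I would assume (B), hence (A), and apply Theorem~\ref{th-sl} to conclude that $\sigma$ is $(2n+{\rm lh})$-equivalent to the trivial string link. Capping off by the closure operation, which carries $2n$-moves, self-crossing changes and ambient isotopies of string links to the corresponding moves on links (self-crossings remaining self-crossings because closure joins the $i$th endpoints to one another), then yields that $L=\widehat{\sigma}$ is $(2n+{\rm lh})$-equivalent to the trivial link.

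For the ``only if'' direction I would instead verify that condition (B) is invariant under each generating move on links. A single $2n$-move or self-crossing change takes place in a ball $B$; choosing the cut points outside $B$ presents $L$ and the modified link $\widetilde{L}$ as closures of string links $\sigma$ and $\widetilde{\sigma}$ that differ by exactly one such move, whence $\mu_\sigma(I)\equiv\mu_{\widetilde{\sigma}}(I)\pmod n$ by Theorem~\ref{th-sl}, so (B) holds for $L$ iff it holds for $\widetilde{L}$; invariance under ambient isotopy is automatic, since (B) is expressed through the link invariants $\omu_L(I)$. As the trivial link plainly satisfies (B), so does any link $(2n+{\rm lh})$-equivalent to it. The step I expect to be the main obstacle is precisely the inductive reconciliation in the second paragraph: one must check that the divisibility $n\mid\Delta_L(I)$ propagates correctly up the length filtration, so that the mod-$n$ Milnor data of the link is both well-defined and faithfully represented by the integers $\mu_\sigma(I)$ of any cutting. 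Once this bookkeeping is secured, both implications fall out of Theorem~\ref{th-sl}.
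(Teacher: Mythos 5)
Your proof is correct, and while it rests on the same engine as the paper's (Theorem~\ref{th-sl} and its invariance half, Theorem~\ref{th-invariance}, applied to cuttings of the links), it is organized along a genuinely different route. The paper deduces the corollary from two more general statements: Proposition~\ref{prop-inv-link}~(1), which shows that $\Delta^{(n)}_L(I)=\gcd\{\Delta_{\sigma}(I),n\}$ and the class $\omu^{(n)}_L(I)$ of $\mu_\sigma(I)$ modulo $\Delta^{(n)}_L(I)$ are invariants of $(2n+{\rm lh})$-equivalence, and Theorem~\ref{th-link}, a relative classification of pairs of links under the hypothesis $\Delta^{(n)}=n$ in length $m$; both rest on Lemma~\ref{lem-2nlh}, the analogue of Habegger--Lin's Lemma~2.5, which lifts a $(2n+{\rm lh})$-equivalence of links \emph{globally} to one of string links with prescribed closures (and whose proof the paper omits, citing~\cite{HL}). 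You bypass that lifting lemma entirely: for the ``only if'' direction you transfer the mod-$n$ condition one generating move at a time, cutting outside the ball supporting the move and applying Theorem~\ref{th-invariance} to the resulting string links, while independence of the cutting is handled by your inductive reconciliation --- which is exactly the Habegger--Lin fact that $\mu_\sigma(I)$ modulo $\Delta_\sigma(I)$ is an invariant of the closure, combined with the observation that for non-repeated $I$ every subsequence entering $\Delta$ is shorter and non-repeated, so $n\mid\Delta$ propagates up the length filtration. That induction is precisely what the paper's formalism $\Delta^{(n)}_L$, $\omu^{(n)}_L$ encapsulates; note that for the trivial link $\Delta_{\mathbf{1}_m}(I)=0$, whence $\Delta^{(n)}=n$ and $\omu^{(n)}=0$, so your condition agrees with the one in the paper's Corollary~\ref{cor-trivial}. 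The trade-off: the paper's route yields the stronger relative statements (comparison of two arbitrary links, plus the $2p$-move analogue in Proposition~\ref{prop-inv-link}~(2)) at the cost of the lifting lemma, whereas yours gives a self-contained proof of just this corollary; your ``if'' direction coincides with the paper's, applying Theorem~\ref{th-sl} to a cutting and observing that closure carries $2n$-moves and self-crossing changes of string links to the corresponding moves on links.
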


In~\cite{Fox}, R.~H.~Fox introduced the notion of {\em congruence classes modulo $(n,q)$} of knots in $S^{3}$ for integers $n>0$ and $q\geq 0$, 
and asked whether the set of congruence classes of a knot determines the knot type. 
More precisely, he asked the following question:  
If two knots are congruent modulo $(n,q)$ for every $n$ and $q$, then are they ambient isotopic? 
We note that the notion of congruences and the question can be extended to (string) links. 
It is known in~\cite{Fox,NS,N,La} that the Alexander and Jones polynomials restrict the possible congruence classes. 
In particular, M.~Lackenby proved that if two links are congruent modulo $(n,2)$ for every $n$, 
then they have the same Jones polynomial~\cite[Corollaly 2.4]{La}.

Since the $2n$-move equivalence implies the congruence modulo~$(n,2)$, 
it would be interesting to ask whether the set of $2n$-move equivalence classes of a (string) link determines the link type. 
Theorem~\ref{th-sl} implies that if two string links are $2n$-move equivalent for every $n$, 
then they share all Milnor invariants for non-repeated sequences. 
Combining this and 
the classification of string links up to link-homotopy~\cite{HL}, 
we have the following corollary.

\begin{corollary} 
If two string links are $2n$-move equivalent for every $n$, then they are link-homotopic. 
In particular, if a $($string$)$ link $L$ is $2n$-move equivalent to the trivial one for every $n$, then $L$ is link-homotopically trivial. 
\end{corollary}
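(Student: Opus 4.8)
The plan is to deduce the statement formally from Theorem~\ref{th-sl}, the Habegger--Lin link-homotopy classification \cite{HL}, and Corollary~\ref{cor-trivial}, using crucially that the hypothesis is assumed for \emph{every} $n$. First I would note the elementary inclusion of relations: since the $(2n+\mathrm{lh})$-equivalence is generated by $2n$-moves together with the additional self-crossing changes, any two $2n$-move equivalent string links are in particular $(2n+\mathrm{lh})$-equivalent. Hence if $\sigma$ and $\sigma'$ are $2n$-move equivalent for every positive integer $n$, they are $(2n+\mathrm{lh})$-equivalent for every $n$, and applying Theorem~\ref{th-sl} at each $n$ gives $\mu_\sigma(I)\equiv\mu_{\sigma'}(I)\pmod n$ for every non-repeated sequence $I$ and every $n$.

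The second step passes from congruences to equalities. As the Milnor $\mu$-invariants of string links are genuine integers with no indeterminacy \cite{HL}, the relation $\mu_\sigma(I)\equiv\mu_{\sigma'}(I)\pmod n$ for all $n$ forces $n\mid\bigl(\mu_\sigma(I)-\mu_{\sigma'}(I)\bigr)$ for all $n$, and therefore $\mu_\sigma(I)=\mu_{\sigma'}(I)$ for every non-repeated $I$. By the classification of string links up to link-homotopy \cite{HL}, agreement of all non-repeated Milnor invariants is equivalent to being link-homotopic, so $\sigma$ and $\sigma'$ are link-homotopic; taking $\sigma'$ to be the trivial string link yields the ``in particular'' statement in the string-link case.

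For a link $L$ in $S^3$ I would argue through Corollary~\ref{cor-trivial} instead. If $L$ is $2n$-move equivalent to the trivial link for every $n$, then it is $(2n+\mathrm{lh})$-equivalent to the trivial link for every $n$, so Corollary~\ref{cor-trivial} gives $\mu_L(I)\equiv0\pmod n$ for every non-repeated $I$ and every $n$; the same divisibility argument forces $\overline{\mu}_L(I)=0$ for all non-repeated $I$. Running this length by length, the indeterminacy at each stage is generated by the lower invariants, which already vanish, so in fact all non-repeated $\overline{\mu}_L(I)$ vanish without ambiguity, and a classical result of Milnor \cite{M54} then identifies $L$ as link-homotopically trivial.

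I expect no computational obstacle here: the corollary is a formal consequence of the two main results proved earlier together with \cite{HL}. The only point requiring genuine care is the link case, which cannot be derived from the string-link statement directly, because the $\overline{\mu}$-invariants do \emph{not} classify links of four or more components up to link-homotopy (\cite{L}). What saves the argument is that detecting link-homotopic \emph{triviality} requires only the weaker fact that the vanishing of all non-repeated $\overline{\mu}$-invariants characterizes the trivial link-homotopy class, which is compatible with Levine's non-classification result and is precisely what the inductive indeterminacy bookkeeping above supplies.
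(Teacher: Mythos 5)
Your proposal is correct and takes essentially the same route as the paper: the paper derives the string-link statement exactly as you do, by noting that $2n$-move equivalence implies $(2n+{\rm lh})$-equivalence, applying Theorem~\ref{th-sl} for every $n$ to upgrade the congruences $\mu_\sigma(I)\equiv\mu_{\sigma'}(I)\pmod n$ to equalities, and invoking the Habegger--Lin classification~\cite{HL}, while the link case is handled, as in your third paragraph, through Corollary~\ref{cor-trivial} together with the classical fact that vanishing of all non-repeated $\overline{\mu}$-invariants (with the indeterminacies killed inductively, length by length) characterizes link-homotopical triviality~\cite{M54}. Your closing remark correctly identifies why Levine's non-classification result~\cite{L} is no obstruction, which is exactly the point implicit in the paper's argument.
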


\section{Preliminaries}\label{sec-sl}
In this section, we summarize the definitions of string links and their Milnor invariants from~\cite{M57,F,HL,Yagt}.

\subsection{String links and Milnor $\mu$-invariants}
Let $\mathbb{D}^{2}$ be the unit disk in the plane equipped with $m$ points $x_{1},\ldots,x_{m}$ in its interior, 
lying in order on the $x$-axis. 
Let $I_{1},\ldots,I_{m}$ be $m$ copies of $[0,1]$. 
An {\em $m$-component string link} is the image of a proper embedding 
\[
\bigsqcup_{i=1}^{m}I_{i}\longrightarrow \mathbb{D}^{2}\times [0,1]
\] 
such that the image of each $I_{i}$ runs from $(x_{i},0)$ to $(x_{i},1)$. 
Each strand of a string link is oriented upward. 
The $m$-component string link $\{x_{1},\ldots,x_{m}\}\times [0,1]$ in $\mathbb{D}^{2}\times [0,1]$ is called the {\em trivial $m$-component string link}, and is denoted by $\mathbf{1}_{m}$. 

Given an $m$-component string link $\sigma$, 
let $G(\sigma)$ denote the fundamental group of the complement $(\mathbb{D}^{2}\times [0,1])\setminus\sigma$ with a base point on the boundary of $\mathbb{D}^{2}\times\{0\}$, 
and let $G(\sigma)_{q}$ denote the $q$th term of the lower central series of $G(\sigma)$. 
Let $\alpha_{i}$ and $l_{i}$ be the $i$th meridian and the $i$th longitude of $\sigma$, respectively, illustrated in Figure~\ref{peripheral}. 
Abusing notation, we still denote by $\alpha_{i}$ the image of $\alpha_{i}$ in the $q$th nilpotent quotient $G(\sigma)/G(\sigma)_{q}$. 
We assume that each $l_{i}$ is the {\em preferred} longitude, i.e. the zero-framed parallel copy of the $i$th component of $\sigma$. 
Since $G(\sigma)/G(\sigma)_{q}$ is generated by $\alpha_{1},\ldots,\alpha_{m}$ (see~\cite{C,S}), 
the $i$th longitude $l_{i}$ is expressed as a word in $\alpha_{1},\ldots,\alpha_{m}$ for each $i\in\{1,\ldots,m\}$. 
We denote by $\lambda_{i}$ this word.

\begin{figure}[htbp]
  \begin{center}
    \begin{overpic}[width=10.5cm]{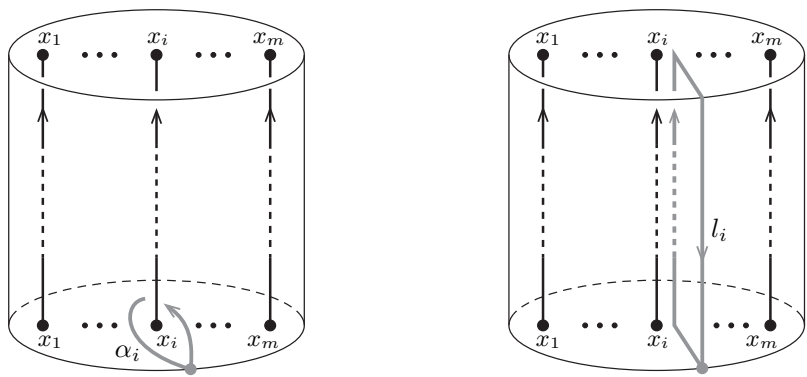}
      \put(11,125.5){{\small $x_{1}$}}
      \put(52,125.5){{\small $x_{i}$}}
      \put(91.5,125.5){{\small $x_{m}$}}
      \put(11,11){{\small $x_{1}$}}
      \put(55.5,11){{\small $x_{i}$}}
      \put(89.5,11){{\small $x_{m}$}}
      \put(40,7){$\alpha_{i}$}
      \put(197.5,125.5){{\small $x_{1}$}}
      \put(239,125.5){{\small $x_{i}$}}
      \put(278,125.5){{\small $x_{m}$}}
      \put(197.5,11){{\small $x_{1}$}}
      \put(239,11){{\small $x_{i}$}}
      \put(276,11){{\small $x_{m}$}}
      \put(263,52){$l_{i}$}
    \end{overpic}
  \end{center}
  \caption{The $i$th meridian $\alpha_{i}$ and the $i$th longitude $l_{i}$}
  \label{peripheral}
\end{figure}

Let $\langle\alpha_{1},\ldots,\alpha_{m}\rangle$ denote the free group on $\{\alpha_{1},\ldots,\alpha_{m}\}$, and let 
$\mathbb{Z}\langle\langle X_{1},\ldots,X_{m}\rangle\rangle$ denote the ring of formal power series in non-commutative variables $X_{1},\ldots,X_{m}$ with integer coefficients. 
The {\em Magnus expansion} is a homomorphism 
\[
E:\langle\alpha_{1},\ldots,\alpha_{m}\rangle
\longrightarrow \mathbb{Z}\langle\langle X_{1},\ldots,X_{m}\rangle\rangle
\]
defined by, for $1\leq i\leq m$: 
\[
E(\alpha_{i})=1+X_{i}, \ E(\alpha_{i}^{-1})=1-X_{i}+X_{i}^{2}-X_{i}^{3}+\cdots. 
\]
Let $I=j_{1}j_{2}\ldots j_{k}i$ $(k<q)$ be a sequence of elements in $\{1,\ldots,m\}$. 
The coefficient of $X_{j_{1}}\cdots X_{j_{k}}$ in the Magnus expansion $E(\lambda_{i})$ is called the {\em Milnor $\mu$-invariant} for the sequence $I$ and is denoted by $\mu_{\sigma}(I)$~\cite{HL}. 
(We define $\mu_{\sigma}(i)=0$.)
The length $|I|$ $(=k+1)$ of $I$ is called the {\em length} of $\mu_{\sigma}(I)$.

\subsection{Milnor's algorithm}\label{subsec-algorithm}
To compute $\mu_{\sigma}(I)$ we need to obtain the word $\lambda_{i}$ in $\alpha_{1},\ldots,\alpha_{m}$ concretely, which represents the $i$th longitude $l_{i}$. 
In~\cite{M57}, Milnor introduced an algorithm to give $\lambda_{i}$ by using the Wirtinger presentation of $G(\sigma)$ and a sequence of homomorphisms $\eta_{q}$ as follows. 
(Although this algorithm was actually given for Milnor invariants of links in $S^{3}$, it can be applied to those of string links.)

Given an $m$-component string link $\sigma$, 
consider its diagram $D_{1}\cup\cdots\cup D_{m}$. 
Put labels $a_{i1},a_{i2},\ldots,a_{ir(i)}$ in order on all arcs of the $i$th component $D_{i}$ 
while we go along orientation on $D_{i}$ from the initial arc, 
where $r(i)$ denotes the number of arcs of $D_{i}$ $(i=1,\ldots,m)$. 
Then the Wirtinger presentation of $G(\sigma)$ has the form 
\[
\left\langle a_{ij}\ (1\leq i\leq m,1\leq j\leq r(i))~\vline~a_{ij+1}^{-1}u_{ij}^{-1}a_{ij}u_{ij}\ (1\leq i\leq m,1\leq j\leq r(i)-1)\right\rangle, 
\]
where the $u_{ij}$ are generators or inverses of generators which depend on the signs of the crossings.  
Here we set 
\[
v_{ij}=u_{i1}u_{i2}\ldots u_{ij}. 
\]
Let $\overline{A}$ denote the free group on the Wirtinger generators $\{a_{ij}\}$, 
and let $A$ denote the free subgroup generated by $a_{11},a_{21},\ldots,a_{m1}$. 
A sequence of homomorphisms $\eta_{q}:\overline{A}\rightarrow A$ is defined inductively by 
\begin{eqnarray*}
\eta_{1}(a_{ij})=a_{i1},\ \eta_{q+1}(a_{i1})=a_{i1}, \\
\eta_{q+1}(a_{ij+1})=\eta_{q}(v_{ij}^{-1}a_{i1}v_{ij}). 
\end{eqnarray*}
Let $\overline{A}_{q}$ denote the $q$th term of the lower central series of $\overline{A}$, and 
let $N$ denote the normal subgroup of $\overline{A}$ generated by the Wirtinger relations $\{a_{ij+1}^{-1}u_{ij}^{-1}a_{ij}u_{ij}\}$. 
Milnor proved that 
\begin{equation}\label{eq-Milnor}
\eta_{q}(a_{ij})\equiv a_{ij}\pmod{\overline{A}_{q}N}.  
\end{equation}
By the construction of the Wirtinger presentation, 
$a_{i1}$ represents the $i$th meridian of $\sigma$. 
Hence, we have the natural homomorphism 
\[
\phi:A\longrightarrow\langle\alpha_{1},\ldots,\alpha_{m}\rangle 
\] 
defined by $\phi(a_{i1})=\alpha_{i}$ $(i=1,\ldots,m)$.
Since $v_{ir(i)-1}=u_{i1}\ldots u_{ir(i)-1}$ represents an $i$th longitude, 
for the preferred longitude $l_{i}$  
we regard that $l_{i}=a_{i1}^{s}v_{ir(i)-1}$ for some $s\in\mathbb{Z}$. 
Moreover, we can identify $\phi\circ\eta_{q}(l_{i})$ with $\lambda_{i}$ by Congruence~(\ref{eq-Milnor}).

\section{Milnor invariants and $2n$-moves} 
In this section, we discuss the invariance of Milnor invariants under $2n$-moves. 

\subsection{Milnor link-homotopy invariants and $2n$-moves}
The following theorem reveals how Milnor link-homotopy invariants, i.e. $\mu$-invariants for non-repeated sequences, behave under $2n$-moves.

\begin{theorem}\label{th-invariance}
Let $n$ be a positive integer. 
If two string links $\sigma$ and $\sigma'$ are $(2n+{\rm lh})$-equivalent, 
then $\mu_{\sigma}(I)\equiv\mu_{\sigma'}(I)\pmod{n}$ for any non-repeated sequence~$I$. 
\end{theorem}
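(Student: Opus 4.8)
The plan is to treat the two kinds of generating moves separately. Self-crossing changes and ambient isotopies preserve every Milnor $\mu$-invariant for a non-repeated sequence, since these are precisely the Habegger--Lin link-homotopy invariants~\cite{HL}; this requires nothing new. Consequently it suffices to show that a \emph{single} $2n$-move changes each $\mu_{\sigma}(I)$, with $I$ non-repeated, by an integer multiple of $n$. First I would dispose of the case in which the two strands entering the move lie on the same component: there the $2n$ inserted crossings are all self-crossings, and changing $n$ of them (a link-homotopy) turns the $2n$ like-signed crossings into $n$ positive and $n$ negative ones, which cancel by Reidemeister~II. Thus a same-component $2n$-move is a composition of self-crossing changes and isotopies, and by the previous sentence it alters no non-repeated $\mu$-invariant. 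Hence I may assume the move is performed between two \emph{distinct} components, say $a$ and $b$.

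Next I would localize the move in a ball $B$ meeting $\sigma$ in one arc of $a$ and one arc of $b$, and run Milnor's algorithm from Section~\ref{subsec-algorithm} on a diagram adapted to $B$. The $2n$-move amounts to $n$ full twists of these two arcs; equivalently it is $\mp 1$-surgery, repeated $n$ times, on a circle bounding a disk pierced by both arcs, whose meridian is $\alpha_a\alpha_b$ up to conjugation. Tracking the Wirtinger generators through the twist region and applying the framing normalization $l_i=a_{i1}^{s}v_{i\,r(i)-1}$, I expect that the only longitudes affected are those of $a$ and $b$, and that $\lambda_a$ is changed by splicing in the word $W=\prod_{k=1}^{n}\alpha_a^{k}\,\alpha_b^{\varepsilon}\,\alpha_a^{-k}$, a product of $n$ conjugates of $\alpha_b^{\varepsilon}$ with $\varepsilon=\pm1$ the sign of the crossings (this is the zero-framed form of $(\alpha_a\alpha_b)^{\varepsilon n}$), with the symmetric statement---conjugates of $\alpha_a^{\varepsilon}$---for $\lambda_b$. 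Writing $\lambda_a=PQ$ at the splice point, the new longitude word is $PWQ$.

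Finally I would extract the invariants. For $I=j_1\cdots j_k a$ non-repeated, $\mu_{\sigma}(I)$ is the coefficient of the monomial $M=X_{j_1}\cdots X_{j_k}$ in $E(\lambda_a)$, and $M$ is multilinear and free of $X_a$. From $E(PWQ)-E(PQ)=E(P)\bigl(E(W)-1\bigr)E(Q)$ I read the change of this coefficient as a sum over factorizations $M=M_P\,M_W\,M_Q$ with $M_W$ a nonconstant multilinear monomial of $E(W)$; since $M$ avoids $X_a$ so does $M_W$, and because every monomial of $E(W)$ other than $X_b$ involves $X_a$, the only surviving choice is $M_W=X_b$. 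The coefficient of $X_b$ in $E(W)$ is exactly $n$ (each of the $n$ factors $\alpha_a^{k}\alpha_b^{\varepsilon}\alpha_a^{-k}$ contributes $\varepsilon$ to it, while the conjugators $\alpha_a^{\pm k}$ add only $X_a$'s), so the change in $\mu_{\sigma}(I)$ is $n$ times an integer. The case $I$ ending in $b$ is symmetric, and for $I$ ending in some $c\neq a,b$ the word $\lambda_c$ is unchanged. The main obstacle is the middle step: justifying rigorously that the longitude changes by the clean splice $PWQ$ with $W$ of the stated shape. This means controlling, inside Milnor's algorithm, how the Wirtinger generators on $a$ and $b$ beyond the twist region are conjugated and how the zero-framing correction interacts with the inserted crossings, so that after passing to words in the $\alpha_i$ no stray contributions to multilinear, $X_a$-free monomials appear. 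Once this bookkeeping is pinned down, the divisibility follows at once from the form of $W$.
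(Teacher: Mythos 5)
Your outer reductions and your final coefficient extraction are essentially the paper's: self-crossing changes cost nothing by Habegger--Lin, a same-component $2n$-move is a link-homotopy, and at the splice point the square-free, $X_a$-free part of the inserted word's Magnus expansion is indeed divisible by $n$. (Two small inaccuracies there: the conjugators produced by Milnor's $\eta_{q}$ are arbitrary words $w$ in \emph{all} the meridians, not powers of $\alpha_a$, so the surviving multilinear monomials of $E(W)$ are those of $n\cdot E(w^{-1})X_bE(w)$ rather than just $n\varepsilon X_b$ --- the divisibility conclusion survives; and you never treat the antiparallel case, which the paper reduces to the parallel one by a link-homotopy as in Figure~\ref{anti-parallel}.)

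The genuine gap is exactly the step you defer: the claim that the longitude changes by a clean splice $PWQ$ and that $\lambda_c$ is \emph{unchanged} for $c\neq a,b$ is false as stated. Every Wirtinger generator of components $a$ and $b$ lying beyond the twist region acquires the inserted word $\left(a'_{ah}a'_{kl}\right)^{\pm n}$ inside its conjugator under $\eta'_{q}$, and such generators occur as overcrossing letters $u_{cj}$ in the longitude word of \emph{every} component $c$; so all longitudes change as words in $\alpha_1,\ldots,\alpha_m$, and your $P$ and $Q$ change too. Controlling this propagation is the technical heart of the paper's proof (Claim~\ref{eq-generator}, by induction on $q$), and the arithmetic is subtler than ``an $n$-th power is trivial mod $n$'': by Lemma~\ref{lem-invariance}, $E\left(\phi'\circ\eta'_{q}\left(\left(a'_{ah}a'_{kl}\right)^{\pm n}\right)\right)\overset{(n)}{\equiv}1+\binom{n}{2}R(X_a,X_k)+\mathcal{O}(2)$, and $\binom{n}{2}\equiv n/2\not\equiv 0\pmod n$ when $n$ is even (already for $n=2$, i.e.\ a $4$-move, the inserted square contributes the square-free terms $P(X_a)Q(X_k)+Q(X_k)P(X_a)$ mod $2$). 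These surviving terms contain $X_a$ and $X_k$ once each plus conjugator letters, so they are square-free and \emph{would} contribute to $\mu(I)$ for non-repeated sequences $I$ passing through $a$. They are harmless in the spliced longitude $\lambda_a$ only because each contains $X_a$ while $I$ ends in $a$; everywhere else they are harmless only because the insertions occur in inverse pairs $w^{\mp n}\cdots w^{\pm n}$, contributing $2\binom{n}{2}R=n(n-1)R\equiv 0\pmod n$. This pair-cancellation, together with the induction that carries it through all generators, is the missing idea; without it your ``bookkeeping'' cannot be pinned down for even $n$, and the proof is incomplete precisely where the paper does its real work.
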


For $P,Q\in\mathbb{Z}\langle\langle X_{1},\cdots,X_{m}\rangle\rangle$, we use the notation $P\overset{(n)}{\equiv}Q$ if $P-Q$ is contained in the ideal generated by $n$. 
To show Theorem~\ref{th-invariance} we need the following lemma.

\begin{lemma}\label{lem-invariance}
Let $n\geq2$ be an integer and $\sigma$ an $m$-component string link. 
For any Wirtinger generators $a_{ij}$ and $a_{kl}$ of $G(\sigma)$, 
there exists $R(X_{i},X_{k})\in\mathbb{Z}\langle\langle X_{1},\cdots,X_{m}\rangle\rangle$ such that 
 each term of $R(X_{i},X_{k})$ contains $X_{i}$ and $X_{k}$, and  
\[
E\left(\phi\circ\eta_{q}\left(\left(a_{ij}^{\e} a_{kl}^{\de}\right)^{\pm n}\right)\right)\overset{(n)}{\equiv}1+\binom{n}{2}R(X_{i},X_{k})+\mathcal{O}(2), 
\] 
where $\e,\de\in\{1,-1\}$ 
and $\mathcal{O}(2)$ denotes $0$ or the terms containing $X_{r}$ at least two for some $r$ $(=1,\ldots,m)$. 
\end{lemma}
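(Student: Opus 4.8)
The plan is to reduce everything to a short nilpotency computation in $\mathbb{Z}\langle\langle X_{1},\ldots,X_{m}\rangle\rangle$ once the meridian structure of the relevant group elements is extracted. First I would set $P=E(\phi\circ\eta_{q}(a_{ij}^{\e}))$ and $Q=E(\phi\circ\eta_{q}(a_{kl}^{\de}))$ and write $p=P-1$, $q=Q-1$. The crucial input is that $\phi\circ\eta_{q}(a_{ij})$ is conjugate to $\alpha_{i}$ in $\langle\alpha_{1},\ldots,\alpha_{m}\rangle$: this follows directly from the recursion defining $\eta_{q}$, since $\eta_{q}(a_{i1})=a_{i1}$ and $\eta_{q+1}(a_{ij+1})=\eta_{q}(v_{ij})^{-1}a_{i1}\eta_{q}(v_{ij})$ exhibit $\eta_{q}(a_{ij})$ as a conjugate of $a_{i1}$ (with the case $j=1$ trivial), and $\phi$ sends $a_{i1}$ to $\alpha_{i}$. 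Writing $\phi\circ\eta_{q}(a_{ij})=w\alpha_{i}w^{-1}$ and $W=E(w)$, I get $P=W(1+X_{i})^{\e}W^{-1}$, hence $p=W\big((1+X_{i})^{\e}-1\big)W^{-1}$. Two exact facts fall out of this factorization and drive the whole argument: (i) every monomial of $p$ contains $X_{i}$, because $(1+X_{i})^{\e}-1$ does and conjugation by $W,W^{-1}$ preserves this; and (ii) $p^{2}=W\big((1+X_{i})^{\e}-1\big)^{2}W^{-1}\in\mathcal{O}(2)$, since the central factor is a power series in $X_{i}$ with lowest term $X_{i}^{2}$. The same holds for $q$ with $X_{k}$ in place of $X_{i}$.

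Next I would expand $(PQ)^{n}$ modulo the two-sided ideal $\mathcal{O}(2)$. Put $S=PQ-1=p+q+pq$. Tracking, for each monomial, how many factors of $p$ and of $q$ occur, and using that any product with two or more $p$-factors (or two or more $q$-factors) lies in $\mathcal{O}(2)$ by (i), I find $S^{2}\equiv pq+qp$ and $S^{j}\equiv0$ for all $j\geq3$ modulo $\mathcal{O}(2)$. The binomial expansion therefore collapses to
\[
(PQ)^{n}=\sum_{j=0}^{n}\binom{n}{j}S^{j}\equiv1+nS+\binom{n}{2}S^{2}\pmod{\mathcal{O}(2)},
\]
and reducing modulo $n$ kills the term $nS$, leaving $(PQ)^{n}\overset{(n)}{\equiv}1+\binom{n}{2}(pq+qp)+\mathcal{O}(2)$. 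For the exponent $-n$ the identical collapse applies to $T=(PQ)^{-1}-1\equiv-S+S^{2}$, for which $T^{2}\equiv S^{2}$ and $T^{3}\equiv0$, so $(1+T)^{n}$ yields the same right-hand side. Thus $R(X_{i},X_{k}):=pq+qp$ works for all four sign choices and both exponents: by (i) every monomial of $pq$ and of $qp$ contains $X_{i}$ (from its $p$-factor) and $X_{k}$ (from its $q$-factor), as required.

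The genuinely delicate point is facts (i)--(ii), i.e.\ recognizing that each Wirtinger-generator image is a meridian and hence carries exactly one distinguished variable; once this is in place the remainder is the elementary bookkeeping above. I would therefore spend most of the care on justifying the conjugacy $\phi\circ\eta_{q}(a_{ij})\sim\alpha_{i}$ cleanly from the definition of $\eta_{q}$ (treating $j=1$ and $j\geq2$, and both signs of $\e$), and on verifying that the exponent-tracking argument giving $S^{2}\equiv pq+qp$ and $S^{j}\equiv0$ for $j\geq3$ is valid as a statement modulo $\mathcal{O}(2)$ rather than only after a further reduction. The case $i=k$ is then automatic: there $pq,qp\in\mathcal{O}(2)$, so $R\equiv0$ and the congruence reads $1+\mathcal{O}(2)$, consistent with the statement.
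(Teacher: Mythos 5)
Your proposal is correct and takes essentially the same route as the paper: both exploit that $\phi\circ\eta_{q}(a_{ij}^{\e})$ is a conjugate $w^{-1}\alpha_{i}^{\e}w$ of a meridian, Magnus-expand, and reduce the binomial expansion of the $n$-th power modulo $n$ and modulo the ideal $\mathcal{O}(2)$ so that only the degree-two cross terms $pq+qp$ (the paper's $P(X_{i})Q(X_{k})+Q(X_{k})P(X_{i})$) survive. The only cosmetic difference is your treatment of the exponent $-n$ via the inverse series $T\equiv -S+S^{2}$, where the paper instead rewrites $\left(a_{ij}^{\e}a_{kl}^{\de}\right)^{-n}=\left(a_{kl}^{-\de}a_{ij}^{-\e}\right)^{n}$ and reapplies the positive-power computation; both yield the same $R$, as the application requires.
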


\begin{proof}
By the definition of $\eta_{q}$, 
$\phi\circ\eta_{q}\left(a_{ij}^{\e}\right)=w^{-1}\alpha_{i}^{\e}w$ for some word $w$ in $\alpha_{1},\ldots,\alpha_{m}$. 
Set $E(w)=1+W$ and $E(w^{-1})=1+\overline{W}$, 
where $W$ and $\overline{W}$ denote the terms of degree $\geq1$ such that $\left(1+\overline{W}\right)\left(1+W\right)=1$. 
It follows that 
\begin{eqnarray*}
E\left(\phi\circ\eta_{q}\left(a_{ij}^{\e}\right)\right) 
&=&E\left(w^{-1}\alpha_{i}^{\e}w\right) \\ 
&=&\left(1+\overline{W}\right)\left(1+\e X_{i}\right)\left(1+W\right)+\mathcal{O}(2) \\
&=&1+\e X_{i}+\e X_{i}W+\e\overline{W}X_{i}+\e\overline{W}X_{i}W+\mathcal{O}(2) \\
&=&1+\e P(X_{i})+\mathcal{O}(2), 
\end{eqnarray*}
where $P(X_{i})=X_{i}+X_{i}W+\overline{W}X_{i}+\overline{W}X_{i}W$. 
Note that each term in $P(X_{i})$ contains $X_{i}$. 
Similarly, we have 
\[
E\left(\phi\circ\eta_{q}\left(a_{kl}^{\de}\right)\right)=1+\de Q(X_{k})+\mathcal{O}(2),
\]
where $Q(X_{k})$ denotes the terms of degree $\geq1$, each of which contains $X_{k}$. 
Therefore we have the following. 
\begin{eqnarray*}
&&E\left(\phi\circ\eta_{q}\left(\left(a_{ij}^{\e} a_{kl}^{\de}\right)^{n}\right)\right) \\
&&=\left(\left(1+\e P(X_{i})+\mathcal{O}(2)\right)\left(1+\de Q(X_{k})+\mathcal{O}(2)\right)\right)^{n} \\
&&=\left(1+\e P(X_{i})+\de Q(X_{k})+\e\de P(X_{i})Q(X_{k})+\mathcal{O}(2)\right)^{n} \\
&&=1+\sum_{r=1}^{n}\binom{n}{r}
\left(\e P(X_{i})+\de Q(X_{k})+\e\de P(X_{i})Q(X_{k})+\mathcal{O}(2)\right)^{r} \\
&&\overset{(n)}{\equiv}1+\binom{n}{2}
\left(P(X_{i})+Q(X_{k})+P(X_{i})Q(X_{k})+\mathcal{O}(2)\right)^{2}+\mathcal{O}(2) \\
&&=1+\binom{n}{2}
\left(P(X_{i})Q(X_{k})+Q(X_{k})P(X_{i})+\mathcal{O}(2)\right)+\mathcal{O}(2) \\
&&=1+\binom{n}{2}
\left(P(X_{i})Q(X_{k})+Q(X_{k})P(X_{i})\right)+\mathcal{O}(2). 
\end{eqnarray*}
Similarly, we have 
\begin{eqnarray*}
E\left(\phi\circ\eta_{q}\left(\left(a_{ij}^{\e} a_{kl}^{\de}\right)^{-n}\right)\right) 
&=&E\left(\phi\circ\eta_{q}\left(\left(a_{kl}^{-\de}a_{ij}^{-\e} \right)^{n}\right)\right) \\
&\overset{(n)}{\equiv}& 
1+\binom{n}{2}\left(Q(X_{k})P(X_{i})+P(X_{i})Q(X_{k})\right)
+\mathcal{O}(2) \\
&=&1+\binom{n}{2}
\left(P(X_{i})Q(X_{k})+Q(X_{k})P(X_{i})\right)+\mathcal{O}(2).   
\end{eqnarray*}
Setting $R(X_{i},X_{k})=P(X_{i})Q(X_{k})+Q(X_{k})P(X_{i})$, we obtain the desired congruence. 
\end{proof}

\begin{proof}[Proof of Theorem~$\ref{th-invariance}$] 
It is obvious for $n=1$, and hence we consider the case $n\geq2$. 
Since $\mu$-invariants for non-repeated sequences are link-homotopy invariants, 
we show that their residue classes modulo $n$ are preserved under $2n$-moves. 

Assume that two $m$-component string links $\sigma$ and $\sigma'$ are related by a single $2n$-move. 
A $2n$-move involving two strands of a single component is realized by link-homotopy. 
Furthermore, a $2n$-move whose two strands are oriented antiparallel is generated by link-homotopy and a $2n$-move whose strands are oriented parallel, see Figure~\ref{anti-parallel}. 
(Note that $2$-component string links having the same linking number are link-homotopic.)
Thus, we may assume that two strands performing the $2n$-move, which relates $\sigma$ to $\sigma'$, are oriented parallel and belong to different components.

\begin{figure}[htbp]
  \begin{center}
    \vspace{1em}
    \begin{overpic}[width=12cm]{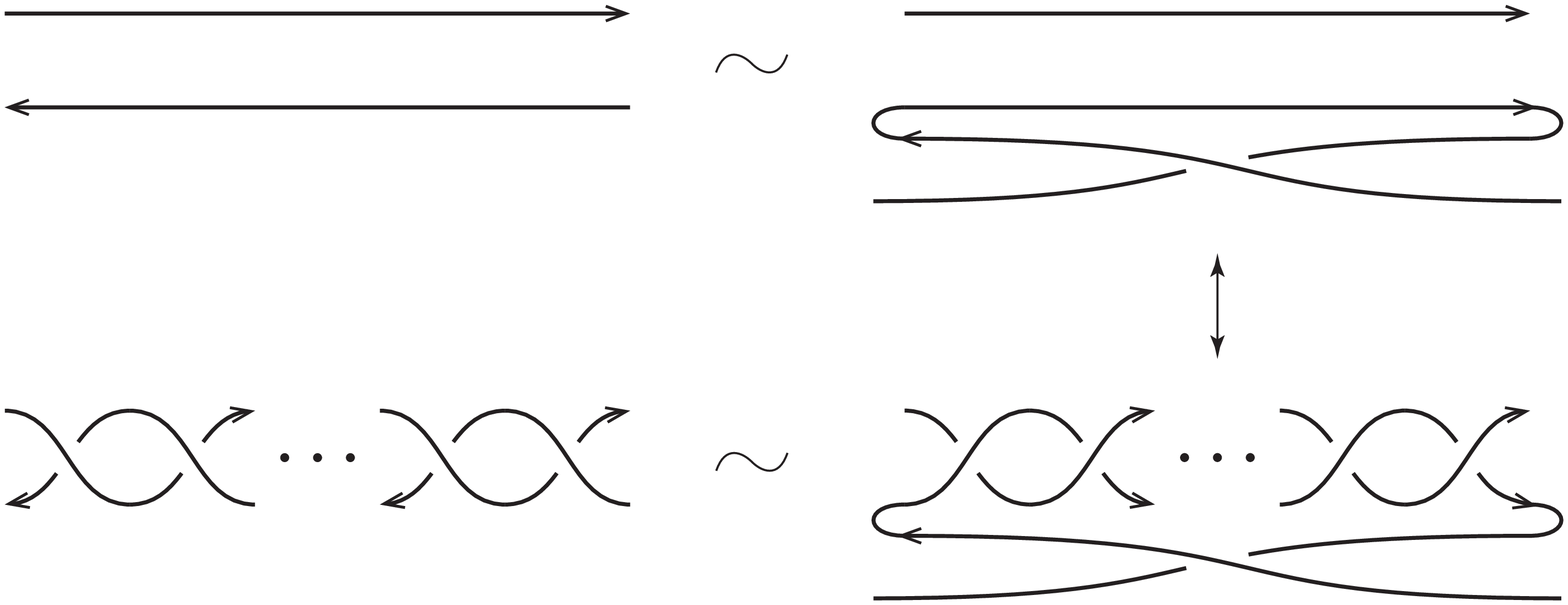}
      \put(151,124){{\footnotesize isotopy}}
      \put(141,37){{\footnotesize link-homotopy}}
      \put(271,61){$2n$-move}
      \put(12,40){{\footnotesize 1}}
      \put(118.5,40){{\footnotesize $2n$}}
      \put(208.5,40){{\footnotesize 1}}
      \put(315,40){{\footnotesize $2n$}}
    \end{overpic}
  \end{center}
  \caption{}
  \label{anti-parallel}
\end{figure}

There are diagrams $D$ and $D'$ of $\sigma$ and $\sigma'$, respectively, which are identical except in a disk $\Delta$ where they differ as illustrated in Figure~\ref{2n-move}. 
(It can be seen that  the move in the disk $\Delta$ of Figure~\ref{2n-move} is equivalent to a $2n$-move.) 
Put labels $a_{ij}$ $(1\leq i\leq m$, $1\leq j\leq r(i))$ on all arcs of $D$ as described in Section~\ref{subsec-algorithm}, 
and put labels $a'_{ij}$ on all arcs in $D'\setminus\Delta$ 
which correspond to the arcs labeled $a_{ij}$ in $D\setminus\Delta$. 
Also put labels $b'_{1},\ldots,b'_{2n},c'_{1},\ldots,c'_{2n}$ on the arcs of $D'$ in $\Delta$ as illustrated in Figure~\ref{2n-move}. 
Let $\overline{A'}$ be the free group on $\{a'_{ij}\}\cup\{b'_{1},\ldots,b'_{2n},c'_{1},\ldots,c'_{2n}\}$ 
and $A'$ the free subgroup on $\{a'_{11},a'_{21},\ldots,a'_{m1}\}$. 
Let $\eta'_{q}:\overline{A'}\rightarrow A'$ denote the sequence of homomorphisms associated with $D'$ given in Section~\ref{subsec-algorithm}, 
and define a homomorphism $\phi':A'\rightarrow\langle\alpha_{1},\ldots,\alpha_{m}\rangle$ by $\phi'(a'_{i1})=\alpha_{i}$ $(i=1,\ldots,m)$.

\begin{figure}[htbp]
  \begin{center}
    \vspace{1em}
    \begin{overpic}[width=11cm]{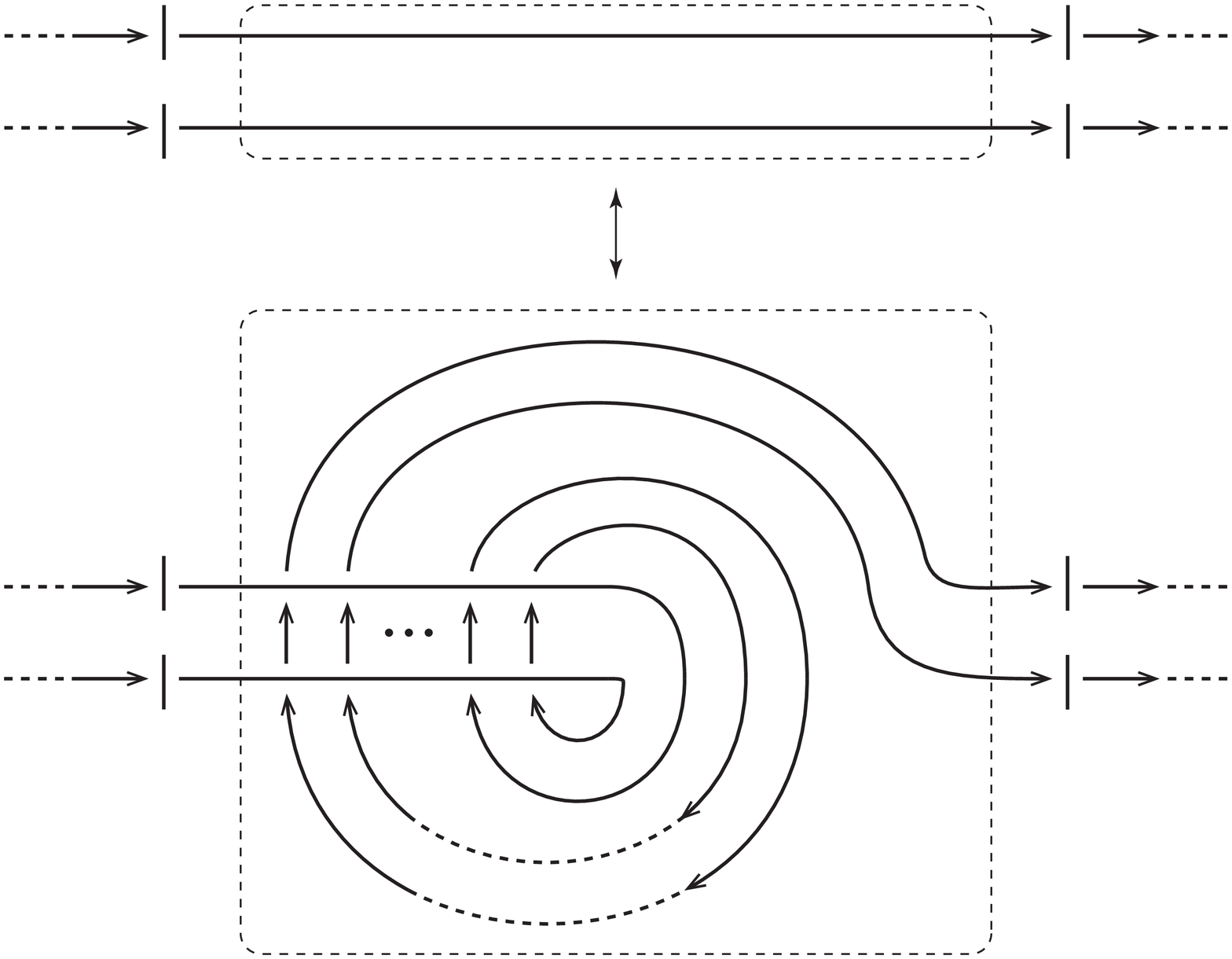}
      \put(-21,219){$D$ :}
      \put(-21,79){$D'$ :}
      \put(162,180){$2n$-move}
      \put(152,244){$\Delta$}
      \put(152,-10){$\Delta$}
      \put(11,239){$a_{kl-1}$}
      \put(11,201){$a_{gh-1}$}
      \put(47,239){$a_{kl}$}
      \put(47,201){$a_{gh}$}
      \put(277,239){$a_{kl+1}$}
      \put(277,201){$a_{gh+1}$}
      \put(11,101){$a'_{kl-1}$}
      \put(11,59){$a'_{gh-1}$}
      \put(46,101){$a'_{kl}$}
      \put(46,59){$a'_{gh}$}
      \put(277,101){$a'_{kl+1}$}
      \put(277,59){$a'_{gh+1}$}
      \put(138,78){{\footnotesize $b'_{1}$}}
      \put(122,78){{\footnotesize $c'_{1}$}}
      \put(155,99){{\footnotesize $b'_{2}$}}
      \put(137,123){{\footnotesize $c'_{2}$}}
      \put(98,110){{\footnotesize $b'_{2n}$}}
      \put(67,126){{\footnotesize $c'_{2n}$}}
      \put(102,42){{\footnotesize $b'_{2n-2}$}}
      \put(63,29){{\footnotesize $c'_{2n-2}$}}
    \end{overpic}
  \end{center}
  \caption{$D$ and $D'$ are related by a single $2n$-move.}
  \label{2n-move}
\end{figure}

For the $i$th preferred longitudes $l_{i}$ and $l'_{i}$ associated with $D$ and $D'$, respectively, 
it is enough to show that 
\begin{equation}\label{eq-longitude}
E\left(\phi\circ\eta_{q}\left(l_{i}\right)\right)
\overset{(n)}{\equiv}E\left(\phi'\circ\eta'_{q}\left(l'_{i}\right)\right)+\mathcal{O}(2)+\mathcal{P}(X_{i})
\end{equation}
for any $1\leq i\leq m$, 
where $\mathcal{P}(X_{i})$ denotes the terms containing $X_{i}$. 
To show the congruence above, we need the following claim.

\begin{claim}\label{eq-generator}
For any $1\leq i\leq m$ and $1\leq j\leq r(i)$, we have 
\begin{equation*}
E\left(\phi\circ\eta_{q}\left(a_{ij}\right)\right)
\overset{(n)}{\equiv}E\left(\phi'\circ\eta'_{q}\left(a'_{ij}\right)\right)+\mathcal{O}(2). 
\end{equation*}  
\end{claim}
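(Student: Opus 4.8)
The plan is to prove Claim~\ref{eq-generator} by induction on $q$, using the recursion $\eta_{q+1}(a_{ij+1}) = \eta_q(v_{ij}^{-1} a_{i1} v_{ij})$ and feeding the extra crossings in $\Delta$ into Lemma~\ref{lem-invariance}. For $q = 1$ one has $\phi\circ\eta_1(a_{ij}) = \alpha_i = \phi'\circ\eta'_1(a'_{ij})$, so both sides equal $1 + X_i$ and the congruence holds trivially.

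Before the inductive step I would record the bookkeeping principle that the monomials in which some variable occurs at least twice form a two-sided ideal; consequently the symbol $\mathcal{O}(2)$ is stable under products, under inversion of units, and under conjugation, and the relation $\overset{(n)}{\equiv}$ modulo $\mathcal{O}(2)$ may be propagated through any word in the generators. Since $E\circ\phi\circ\eta_q$ is multiplicative, $E\phi\eta_{q+1}(a_{ij+1}) = \big(E\phi\eta_q(v_{ij})\big)^{-1}(1+X_i)\big(E\phi\eta_q(v_{ij})\big)$, and similarly for $D'$. The conjugator $v_{ij}$ is a product of meridional generators; whenever $i\notin\{k,g\}$, or whenever the arc $a_{ij+1}$ lies before $\Delta$, the primed conjugator $v'_{ij}$ is the very same product in the corresponding primed generators, so the inductive hypothesis applied factor by factor and recombined by multiplicativity yields the desired congruence at once.

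The substantive case is a generator $a'_{ij+1}$ with $i\in\{k,g\}$ whose arc lies after $\Delta$. Here $v'_{ij}$ is obtained from $v_{ij}$ by inserting the subword $\Xi$ contributed by the $2n$ crossings; reading $\Xi$ off the Wirtinger relations encoded by the labels $b'_1,\dots,b'_{2n},c'_1,\dots,c'_{2n}$ of Figure~\ref{2n-move}, I would check that it has the shape $(a_{pq}^{\e} a_{rs}^{\de})^{\pm n}$ treated in Lemma~\ref{lem-invariance}. That lemma then gives $M := E\phi'\eta'_q(\Xi) \overset{(n)}{\equiv} 1 + \binom{n}{2}R + \mathcal{O}(2)$, where every monomial of $R$ contains $X_i$ (indeed it contains both $X_i$ and the meridian variable of the other strand). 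Writing the full primed conjugator as the common word with $\Xi$ inserted and moving $\Xi$'s contribution to one side, the inductive hypothesis identifies $E\phi'\eta'_{q+1}(a'_{ij+1})$, modulo $n$ and $\mathcal{O}(2)$, with the conjugate of $E\phi\eta_{q+1}(a_{ij+1}) = 1 + P(X_i) + \mathcal{O}(2)$ by a factor congruent to $M$.

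The crux --- and the step I expect to be the main obstacle --- is the collapse of this last conjugation. Every monomial of $P(X_i)$ contains $X_i$, and by the previous paragraph so does every monomial of the nontrivial part of $M$; hence each correction term produced by the conjugation is a product in which $X_i$ occurs at least twice and therefore lies in $\mathcal{O}(2)$. Thus conjugation by $M$ is trivial modulo $n$ and modulo $\mathcal{O}(2)$, and $E\phi'\eta'_{q+1}(a'_{ij+1}) \overset{(n)}{\equiv} 1 + P(X_i) + \mathcal{O}(2) = E\phi\eta_{q+1}(a_{ij+1}) + \mathcal{O}(2)$, completing the induction. The delicate points to get right are precisely the ones Lemma~\ref{lem-invariance} is designed for: the degree-one, pure-other-strand (linking number) contribution of the $2n$ crossings must vanish modulo $n$, which is exactly the vanishing of the $r\neq 2$ terms $\binom{n}{r}$; and inserting $\Xi$ in the interior of the conjugator rather than at its end only replaces $R$ by one of its conjugates, whose monomials still all contain $X_i$, so the collapse is unaffected.
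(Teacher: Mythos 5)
Your proposal is correct and follows essentially the same route as the paper: induction on $q$ with the case split on whether $v_{ij}$ passes through $\Delta$, identification of the inserted twist word with $\left(a'_{ih}a'_{kl}\right)^{\pm n}$, and Lemma~\ref{lem-invariance} to reduce its expansion modulo $n$ to $1+\binom{n}{2}R(X_{i},X_{k})+\mathcal{O}(2)$ with every monomial of $R$ containing $X_{i}$. The only differences are cosmetic: you organize the final collapse as an exact conjugation whose commutator corrections land in $\mathcal{O}(2)$, whereas the paper substitutes $1+\binom{n}{2}R$ for \emph{both} inserted factors and kills the resulting $2\binom{n}{2}R=n(n-1)R$ modulo $n$; and your parenthetical attributing the lemma to ``the vanishing of the $r\neq 2$ terms $\binom{n}{r}$'' is slightly imprecise (only $\binom{n}{1}=n$ vanishes modulo $n$, while the $r\geq 3$ terms die because they lie in $\mathcal{O}(2)$), though this does not affect the argument since you invoke Lemma~\ref{lem-invariance} as stated.
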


Before showing Claim~\ref{eq-generator}, 
we observe that it implies Congruence~(\ref{eq-longitude}). 
Without loss of generality we may assume that $i=1$, 
i.e. we compare the preferred longitudes $l_{1}=a_{11}^{s}v_{1r(1)-1}$ and $l'_{1}={a'}^{t}_{11}v'_{1r(1)-1}$ $(s,t\in\mathbb{Z})$. 
Since two strands in $\Delta$ belong to different components, 
we only need to consider two cases. 

If both of the two strands in $\Delta$ do not belong to the $1$st component, then $s=t$ and 
$l'_{1}$ is obtained from $l_{1}$ by replacing $u_{1j}$ with $u'_{1j}$ $(j=1,\ldots,r(1)-1)$ and $a_{11}$ with $a'_{11}$. 
Therefore, Congruence~(\ref{eq-longitude}) follows from Claim~\ref{eq-generator}. 

If one of the two strands in $\Delta$ belongs to the $1$st component, then Figure~\ref{2n-move} indicates that 
$l_{1}$ and $l'_{1}$ can be written respectively in the forms 
\[
l_{1}=a_{11}^{s}u_{11}\ldots u_{1h-1}u_{1h}\ldots u_{1r(1)-1}
\]
and
\[
l'_{1}={a'}^{s-n}_{11}u'_{11}\ldots u'_{1h-1}\left(a'_{1h}a'_{kl}\right)^{n}u'_{1h}\ldots u'_{1r(1)-1}.
\]
Both $E\left(\phi\circ\eta_{q}\left(a_{11}^{s}\right)\right)$ and $E\left(\phi'\circ\eta'_{q}\left({a'}^{s-n}_{11}\right)\right)$ have the form 
$1+\mathcal{P}(X_{1})$. 
Furthermore, by Lemma~\ref{lem-invariance}, we have 
\[
E\left(\phi'\circ\eta'_{q}\left(\left(a'_{1h}a'_{kl}\right)^{n}\right)\right)
\overset{(n)}{\equiv}1+\binom{n}{2}R(X_{1},X_{k})+\mathcal{O}(2)
=1+\mathcal{P}(X_{1})+\mathcal{O}(2). 
\]
Therefore, this together with Claim~\ref{eq-generator} proves Congruence~(\ref{eq-longitude}).

Now, we turn to the proof of Claim~\ref{eq-generator}. 
The proof is done by induction on $q$. 
The assertion certainly holds for $q=1$. 
Recall that 
\[
\phi\circ\eta_{q+1}\left(a_{ij+1}\right)=\phi\circ\eta_{q}\left(v_{ij}^{-1}a_{i1}v_{ij}\right)\]
and
\[\phi'\circ\eta'_{q+1}\left(a'_{ij+1}\right)=\phi'\circ\eta'_{q}\left({v'_{ij}}^{-1}a'_{i1}v'_{ij}\right).
\]

If $v_{ij}$ does not pass through $\Delta$, 
then it is clear that $v'_{ij}$ is obtained from $v_{ij}$ by replacing $a_{ij}$ with $a'_{ij}$,  
and hence $E\left(\phi\circ\eta_{q}\left(v_{ij}\right)\right)\overset{(n)}{\equiv}E\left(\phi'\circ\eta'_{q}\left(v'_{ij}\right)\right)+\mathcal{O}(2)$ by the induction hypothesis. 
This implies that  
\begin{eqnarray*}
E\left(\phi\circ\eta_{q+1}\left(a_{ij+1}\right)\right)
&=&E\left(\phi\circ\eta_{q}\left(v_{ij}^{-1}a_{i1}v_{ij}\right)\right) \\
&\overset{(n)}{\equiv}&E\left(\phi'\circ\eta'_{q}\left({v'_{ij}}^{-1}a'_{i1}v'_{ij}\right)\right)+\mathcal{O}(2) \\
&=&E\left(\phi'\circ\eta'_{q+1}\left(a'_{ij+1}\right)\right)+\mathcal{O}(2). 
\end{eqnarray*}

If $v_{ij}$ passes through $\Delta$, 
then $v_{ij}$ and $v'_{ij}$ can be written respectively in the forms 
\[
v_{ij}=u_{i1}\ldots u_{ih-1}u_{ih}\ldots u_{ij}
\]
and
\[ 
v'_{ij}=u'_{i1}\ldots u'_{ih-1}\left(a'_{ih}a'_{kl}\right)^{n}u'_{ih}\ldots u'_{ij}.
\]  
Set $E\left(\phi\circ\eta_{q}\left(u_{i1}\ldots u_{ih-1}\right)\right)=1+F$, 
$E\left(\phi\circ\eta_{q}\left(\left(u_{i1}\ldots u_{ih-1}\right)^{-1}\right)\right)=1+\overline{F}$, 
$E\left(\phi\circ\eta_{q}\left(u_{ih}\ldots u_{ij}\right)\right)=1+G$ 
and $E\left(\phi\circ\eta_{q}\left(\left(u_{ih}\ldots u_{ij}\right)^{-1}\right)\right)=1+\overline{G}$, 
where $F,\overline{F},G$ and $\overline{G}$ denote the terms of degree $\geq1$. 
Then we have   
\begin{equation*}
E\left(\phi\circ\eta_{q+1}\left(a_{ij+1}\right)\right)
=\left(1+\overline{G}\right)\left(1+\overline{F}\right)\left(1+X_{i}\right)\left(1+{F}\right)\left(1+G\right). 
\end{equation*}
It follows from the induction hypothesis that 
\begin{eqnarray*}
E\left(\phi'\circ\eta'_{q+1}\left(a'_{ij+1}\right)\right) 
&\overset{(n)}{\equiv}&\left(1+\overline{G}\right)E\left(\phi'\circ\eta'_{q}\left(\left(a'_{ih}a'_{kl}\right)^{-n}\right)\right)
\left(1+\overline{F}\right)\left(1+X_{i}\right) \\
&&\times\left(1+F\right)
E\left(\phi'\circ\eta'_{q}\left(\left(a'_{ih}a'_{kl}\right)^{n}\right)\right)\left(1+G\right)+\mathcal{O}(2). 
\end{eqnarray*}
Lemma~\ref{lem-invariance} implies that 
\begin{eqnarray*}
E\left(\phi'\circ\eta'_{q+1}\left(a'_{ij+1}\right)\right) 
&\overset{(n)}{\equiv}&\left(1+\overline{G}\right)
\left(1+\binom{n}{2} R(X_{i},X_{k})\right)
\left(1+\overline{F}\right)\left(1+X_{i}\right) \\
&&\times\left(1+F\right)
\left(1+\binom{n}{2} R(X_{i},X_{k})\right)
\left(1+G\right)+\mathcal{O}(2). 
\end{eqnarray*}
In particular, we have the following. 
\begin{eqnarray*}
&&\left(1+\binom{n}{2} R(X_{i},X_{k})\right)
\left(1+\overline{F}\right)\left(1+X_{i}\right)
\left(1+F\right)\left(1+\binom{n}{2} R(X_{i},X_{k})\right) \\
&&=\left(1+\binom{n}{2} R(X_{i},X_{k})\right)
\left(1+\left(1+\overline{F}\right)X_{i}\left(1+F\right)\right)\left(1+\binom{n}{2} R(X_{i},X_{k})\right) \\
&&=1+\left(1+\overline{F}\right)X_{i}\left(1+F\right)+2\binom{n}{2} R(X_{i},X_{k})+\mathcal{O}(2) \\
&&\overset{(n)}{\equiv}
1+\left(1+\overline{F}\right)X_{i}\left(1+F\right)+\mathcal{O}(2) \\
&&=\left(1+\overline{F}\right)\left(1+X_{i}\right)\left(1+{F}\right)+\mathcal{O}(2). 
\end{eqnarray*}
This proves Claim~\ref{eq-generator}, and hence completes the proof of Theorem~\ref{th-invariance}. 
\end{proof}

\subsection{Milnor isotopy invariants and $2p$-moves}

For Milnor isotopy invariants, i.e. $\mu$-invariants possibly with {\em repeated} sequences, we have the following.

\begin{proposition}\label{prop-inv-prime}
Let $p$ be a prime number. 
If two string links $\sigma$ and $\sigma'$ are $2p$-move equivalent, then $\mu_{\sigma}(I)\equiv\mu_{\sigma'}(I)\pmod{p}$ for any sequence $I$ of length $\leq p$. 
\end{proposition}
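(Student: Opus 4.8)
The plan is to follow the proof of Theorem~\ref{th-invariance} closely, but to replace Lemma~\ref{lem-invariance} by a sharper statement that exploits the primality of $p$, and to replace the error term ``$\mathcal{O}(2)$'' (terms with a repeated variable) by ``terms of total degree $\geq p$'', which I denote $\mathcal{O}_{\geq p}$. Since $\mu_{\sigma}(I)$ for a sequence $I$ of length $k+1$ is the coefficient of a degree-$k$ monomial in the Magnus expansion of a longitude, the sequences of length $\leq p$ correspond exactly to monomials of degree $\leq p-1$; thus it suffices to compare the Magnus expansions of the preferred longitudes of $\sigma$ and $\sigma'$ modulo $p$ and modulo monomials of degree $\geq p$. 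The one algebraic input that makes $p$ prime essential is the elementary congruence: since $\binom{p}{r}\equiv 0\pmod{p}$ for $0<r<p$, any power series of the form $1+S$ with $S$ of degree $\geq 1$ satisfies $(1+S)^{p}\overset{(p)}{\equiv}1+S^{p}$, and $S^{p}$ has degree $\geq p$.

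First I would prove the prime analogue of Lemma~\ref{lem-invariance}: for any Wirtinger generators $a_{ij},a_{kl}$ and any $\e,\de\in\{1,-1\}$,
\[
E\left(\phi\circ\eta_{q}\left(\left(a_{ij}^{\e}a_{kl}^{\de}\right)^{\pm p}\right)\right)\overset{(p)}{\equiv}1+\mathcal{O}_{\geq p}.
\]
Indeed, as in the proof of Lemma~\ref{lem-invariance}, $E(\phi\circ\eta_{q}(a_{ij}^{\e}a_{kl}^{\de}))=1+S$ with $S$ of degree $\geq1$; the congruence above then gives $1+S^{p}$, and $S^{p}$ contributes only terms of degree $\geq p$. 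I emphasize that this holds for all signs $\e,\de$ and all indices $i,k$ (in particular $i=k$), so, unlike in Theorem~\ref{th-invariance}, there is no need to first reduce the orientation or the components of the two strands. This is crucial here, because the hypothesis is only $2p$-move equivalence and link-homotopy is \emph{not} available.

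Next I would reduce to the case of a single $2p$-move and set up the diagrams $D,D'$ and the data $\eta_{q},\eta'_{q},\phi,\phi'$ of Milnor's algorithm exactly as in the proof of Theorem~\ref{th-invariance}, and prove the analogue of Claim~\ref{eq-generator}, namely $E(\phi\circ\eta_{q}(a_{ij}))\overset{(p)}{\equiv}E(\phi'\circ\eta'_{q}(a'_{ij}))+\mathcal{O}_{\geq p}$, by induction on $q$. The induction step is the same conjugation computation as before, but cleaner: when $v_{ij}$ passes through $\Delta$, the two inserted factors $E(\phi'\circ\eta'_{q}((a'_{ih}a'_{kl})^{\pm p}))$ are both $\overset{(p)}{\equiv}1+\mathcal{O}_{\geq p}$ by the displayed congruence, and conjugation by them changes the result only by terms of degree $\geq p$, since multiplication in the power series ring can only raise degree. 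One does not even need the cancellation $2\binom{n}{2}\equiv 0$ that drove the case of general $n$.

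Finally I would pass from generators to longitudes. The preferred longitude $l'_{i}$ of $\sigma'$ is obtained from $l_{i}$ by relabeling $a\to a'$, by inserting a factor $({a'}^{\e}_{ih}{a'}^{\de}_{kl})^{\pm p}$ when the $i$th strand runs through $\Delta$, and by correcting the meridian power $a_{i1}^{s}$ to ${a'}^{s+c}_{i1}$ so as to keep the zero framing; here $c$ is a multiple of $p$, because it cancels the homology class of the inserted $p$-th power. The relabeling is controlled by the generator congruence, the insertion by the prime analogue of Lemma~\ref{lem-invariance}, and the framing correction by $(1+X_{i})^{c}\overset{(p)}{\equiv}1+\mathcal{O}_{\geq p}$ (valid because $p\mid c$, by the same computation of binomial coefficients modulo $p$). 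Hence $E(\phi\circ\eta_{q}(l_{i}))\overset{(p)}{\equiv}E(\phi'\circ\eta'_{q}(l'_{i}))+\mathcal{O}_{\geq p}$, which says that all $\mu$-invariants of length $\leq p$ agree modulo $p$. The main obstacle is precisely these last two steps in full generality: because link-homotopy is unavailable, I must carry out the longitude and Wirtinger bookkeeping directly in every configuration of the two strands --- same or different components, parallel or antiparallel --- rather than reducing to the single parallel, distinct-component model of Figure~\ref{2n-move}. The governing principle (insertions of $p$-th powers and framing shifts by multiples of $p$ are invisible modulo $p$ below degree $p$) is uniform, but verifying it in the antiparallel and same-component cases is where the real work lies.
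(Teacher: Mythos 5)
Your proposal is correct and takes essentially the same route as the paper: the paper likewise reduces to a single $2p$-move, runs the same induction on $q$ for the generator congruence, and derives everything from the congruence $\left(1+W\right)^{p}\overset{(p)}{\equiv}1+W^{p}$, stated in Claim~\ref{claim-prime}~(1) for an \emph{arbitrary} word $w$ in $\alpha_{1},\ldots,\alpha_{m}$ --- which is exactly your uniform principle and at once absorbs the inserted $p$-th powers, the meridian-power (framing) correction by a multiple of $p$, and the same-component configurations. The only cosmetic difference is that where you anticipate ``real work'' in the antiparallel case, the paper treats the parallel case and dismisses the antiparallel one as strictly similar, precisely because the arbitrary-word form of the key congruence makes the bookkeeping identical.
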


\begin{remark}
(1)~
The restriction on the length of sequences in Proposition~\ref{prop-inv-prime} must be necessary. 
In fact, there exists the following example: 
Let $\sigma=\sigma_{1}^{4}$, where $\sigma_{1}$ is the generator of $2$-braids. 
We can verify that $\mu_{\sigma}(112)=1$ by using a computer program written by Takabatake, Kuboyama and Sakamoto~\cite{TKS}.\footnote{Using the technique of ``grammar compression'', Takabatake, Kuboyama and Sakamoto~\cite{TKS} made a computer program in the program language C++, based on Milnor's algorithm, which is able to give us $\mu$-invariants of length at least $\leq16$.} 
While $\sigma$ is $4$-move equivalent to $\mathbf{1}_{2}$, $\mu_{\sigma}(112)$ is not congruent to $0$ modulo~$2$. 

\noindent
(2)~
Proposition~\ref{prop-inv-prime} cannot be extended to the $2n$-move equivalence classes of string links for a nonprime number $n$. 
For example, let $\sigma=\sigma^{8}_{1}$ then  
the computer program of Takabatake-Kuboyama-Sakamoto gives us that $\mu_{\sigma}(211)=10$. 
While $\sigma$ is $8$-move equivalent to $\mathbf{1}_{2}$, $\mu_{\sigma}(211)$ is not congruent to $0$ modulo~$4$. 
\end{remark}

\begin{proof}[Proof of Proposition~$\ref{prop-inv-prime}$]
Let $D$ and $D'$ be diagrams of $m$-component string links $\sigma$ and $\sigma'$, respectively. 
Assume that $D$ and $D'$ are related by a single $2p$-move whose strands are oriented parallel. 
(In the case where the orientations of two strands of a $2p$-move are antiparallel, the proof is strictly similar. 
Hence, we omit the case.) 
We use the same notation as in the proof of Theorem~\ref{th-invariance}.  
It is enough to show that, for any $1\leq i\leq m$,  
\begin{equation*}
E\left(\phi\circ\eta_{q}\left(l_{i}\right)\right)
\overset{(p)}{\equiv}E\left(\phi'\circ\eta'_{q}\left(l'_{i}\right)\right)+(\text{terms of degree $\geq p$}).  
\end{equation*} 
By arguments similar to those in the proof of Theorem~\ref{th-invariance}, 
$l'_{i}$ is obtained from $l_{i}$ by replacing $a_{kl}$ with $a'_{kl}$ for some $k,l$ and inserting the $p$th powers of elements in the free group $\overline{A'}$ on the Wirtinger generators of $G(\sigma')$.  
The following claim completes the proof. 
\end{proof}

\begin{claim}\label{claim-prime} 
(1)~
For any word $w$ in $\alpha_{1},\ldots,\alpha_{m}$, we have 
\[
E\left(w^{p}\right)
\overset{(p)}{\equiv}
1+(\text{terms of degree $\geq p$}). 
\]
(2)~
For any $1\leq i\leq m$ and $1\leq j\leq r(i)$, we have 
\[
E\left(\phi\circ\eta_{q}\left(a_{ij}\right)\right)
\overset{(p)}{\equiv}
E\left(\phi'\circ\eta'_{q}\left({a'}_{ij}\right)\right)
\overset{(p)}{\equiv}
1+(\text{terms of degree $\geq p$}).
\] 
\end{claim}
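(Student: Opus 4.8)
The plan is to establish~(1) first and then feed it into~(2) as the mechanism that kills the $2p$-move's contributions. Part~(1) is a direct ``freshman's dream''. Write $E(w)=1+M$, where $M\in\mathbb{Z}\langle\langle X_{1},\ldots,X_{m}\rangle\rangle$ consists of terms of degree $\geq1$. Since $1$ is central, the binomial theorem applies and gives
\[
E(w^{p})=(1+M)^{p}=\sum_{r=0}^{p}\binom{p}{r}M^{r}.
\]
As $p$ is prime, $p\mid\binom{p}{r}$ for $1\leq r\leq p-1$, so modulo $p$ every intermediate term vanishes and $E(w^{p})\overset{(p)}{\equiv}1+M^{p}$. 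Because $M$ has degree $\geq1$, the term $M^{p}$ has degree $\geq p$, which is the claim. This step is short, and it is exactly where primality is indispensable: for composite $n$ the coefficients $\binom{n}{r}$ need not be divisible by $n$, consistent with the failures recorded in the Remark.

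For Part~(2) I would induct on $q$, mirroring the proof of Claim~\ref{eq-generator} but tracking ``terms of degree $\geq p$'' in place of $\mathcal{O}(2)$. The base case $q=1$ is immediate from $\eta_{1}(a_{ij})=a_{i1}$ and $\eta'_{1}(a'_{ij})=a'_{i1}$. For the inductive step I would apply the recursions $\phi\circ\eta_{q+1}(a_{ij+1})=\phi\circ\eta_{q}(v_{ij}^{-1}a_{i1}v_{ij})$ and its primed analogue, splitting on whether $v_{ij}$ meets the disk $\Delta$. When it does not, $v'_{ij}$ arises from $v_{ij}$ by the substitution $a_{ij}\mapsto a'_{ij}$, and the induction hypothesis propagates the congruence through conjugation and $E$. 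When it does, $v'_{ij}$ differs from $v_{ij}$ by an inserted $p$th power $(a'_{ih}a'_{kl})^{p}$; since $\phi'\circ\eta'_{q}$ is a homomorphism this becomes $\bigl(\phi'\circ\eta'_{q}(a'_{ih}a'_{kl})\bigr)^{p}$, a genuine $p$th power of a word in $\alpha_{1},\ldots,\alpha_{m}$, so Part~(1) forces its Magnus expansion to be $1+(\text{terms of degree}\geq p)$ modulo $p$.

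The step I expect to be the main obstacle is the degree bookkeeping that certifies the inserted factor perturbs the longitude only in degree $\geq p$. In the product expansion of $E(\phi'\circ\eta'_{q}(l'_{i}))$ the insertion appears as a factor $1+D$ with $D$ of degree $\geq p$, flanked by power series of the form $1+(\cdots)$; deleting it alters the product by a term of degree $\geq p$ only. Via the length--degree dictionary (a $\mu$-invariant of length $\ell$ is a coefficient of a degree-$(\ell-1)$ monomial, so length $\leq p$ sees only degrees $\leq p-1$), such perturbations are invisible to the invariants in question, while the substitution $a_{kl}\mapsto a'_{kl}$ is absorbed by the induction hypothesis exactly as in Claim~\ref{eq-generator}. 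Combining these with Part~(1) yields $E(\phi\circ\eta_{q}(l_{i}))\overset{(p)}{\equiv}E(\phi'\circ\eta'_{q}(l'_{i}))$ modulo terms of degree $\geq p$ for every $i$, which is the congruence needed to finish the proof of Proposition~\ref{prop-inv-prime}.
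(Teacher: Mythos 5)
Your proposal is correct and takes essentially the same route as the paper: Part~(1) is the identical ``freshman's dream'' computation $E(w^{p})=(1+W)^{p}\overset{(p)}{\equiv}1+W^{p}$ with $W^{p}$ of degree $\geq p$, and Part~(2) is the same induction on $q$ modeled on the proof of Claim~\ref{eq-generator}, using the homomorphism property $\eta'_{q}(w^{p})=\bigl(\eta'_{q}(w)\bigr)^{p}$ so that Part~(1) absorbs the inserted $p$th powers. The paper states the inductive step tersely (``by arguments similar to those in the proof of Claim~\ref{eq-generator}''), and your case split on whether $v_{ij}$ meets $\Delta$, together with the observation that a flanked factor $1+D$ with $D$ of degree $\geq p$ perturbs the product only in degree $\geq p$, just fills in exactly those details.
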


\begin{proof}
Set $E\left(w\right)=1+W$, where $W$ denotes the terms of degree~$\geq1$. 
Then $E\left(w^{p}\right)=\left(1+W\right)^{p}\overset{(p)}{\equiv}1+W^{p}$.
This proves Claim~\ref{claim-prime}~(1). 

By arguments similar to those in the proof of Claim~\ref{eq-generator}, 
$\eta'_{q+1}\left(a'_{ij}\right)$ is obtained from $\eta_{q+1}\left(a_{ij}\right)$ by replacing $\eta_{q}\left(a_{kl}\right)$ with $\eta'_{q}\left(a'_{kl}\right)$ some $k,l$ and 
inserting $\eta'_{q}\left(w^{p}\right)$ for 
some elements $w$ in $\overline{A'}$. 
Therefore, using Claim~\ref{claim-prime}~(1), 
we complete the proof of Claim~\ref{claim-prime}~(2) by induction on $q$. 
\end{proof}

\section{Claspers}\label{sec-clasper} 
To show Theorem~\ref{th-sl}, we will use the theory of claspers introduced by K.~Habiro in~\cite{H}. 
In this section, we briefly recall the basic notions of clasper theory from~\cite{H}.
We only need the notion of $C_{k}$-tree in this paper, 
and refer the reader to~\cite{H} for the general definition of claspers.

\subsection{Definitions}
\begin{definition}
Let $\sigma$ be a string link in $\mathbb{D}^{2}\times [0,1]$. 
An embedded disk $T$ in $\mathbb{D}^{2}\times [0,1]$ is called a {\it tree clasper} for $\sigma$ if it satisfies the following:
\begin{enumerate}
\item $T$ decomposes into disks and bands. 

\item Bands are called {\em edges} and each of them connects two distinct disks.

\item Each disk has either one or three incident edges, 
and is then respectively called a {\em disk-leaf} or {\em node}.
\item $\sigma$ intersects $T$ transversely and the intersections are contained in the union of the interior of the disk-leaves.
\end{enumerate}
We say that $T$ is a {\em $C_{k}$-tree} if the number of disk-leaves of $T$ is $k+1$, and is {\it simple} if each disk-leaf of $T$ intersects $\sigma$ at a single point.
(Note that a tree clasper is called a {\it strict tree clasper} in~\cite{H}.)
\end{definition}

We will make use of the drawing convention for claspers of~\cite[Figure~$7$]{H} except for the following: 
a \textcircled{$+$} (resp. \textcircled{$-$}) on an edge represents 
a positive (resp. negative) half-twist.
This replaces the circled $S$ (resp. $S^{-1}$) notation
used in~\cite{H}.

Given a $C_k$-tree $T$ for a string link $\sigma$, there is a procedure to construct a zero-framed link $\gamma(T)$ 
in the complement of $\sigma$.
{\it Surgery along $T$} means surgery along~$\gamma(T)$.
Since surgery along $\gamma(T)$ preserves the ambient space, surgery along the $C_k$-tree $T$ can be regarded as a local move on $\sigma$ in $\mathbb{D}^{2}\times [0,1]$.
Denote by $\sigma_{T}$ the string link in $\mathbb{D}^{2}\times [0,1]$ which is obtained from $\sigma$ by surgery along $T$.
Similarly, 
we define the string link $\sigma_{T_1\cup\cdots \cup T_r}$ obtained from $\sigma$ by surgery along a disjoint union of tree claspers $T_1\cup\cdots \cup T_r$.
A $C_k$-tree $T$ having the shape of the tree clasper in Figure~\ref{linear} (with possibly some half-twists on the edges of $T$)  
is called a {\em linear $C_k$}-tree. 
As illustrated in Figure~\ref{linear}, surgery along a simple linear $C_{k}$-tree for $\sigma$ is ambient isotopic to a band summing of $\sigma$ and the $(k+1)$-component Milnor link\footnote{Also referred to as the {\em Sutton Hoo link} because of a cauldron chain from the Sutton Hoo exhibited in the British Museum~\cite[page 222]{F}.}
~(see~\cite[Fig. 7]{M54}).

\begin{figure}[htbp]
  \begin{center}
    \begin{overpic}[width=12cm]{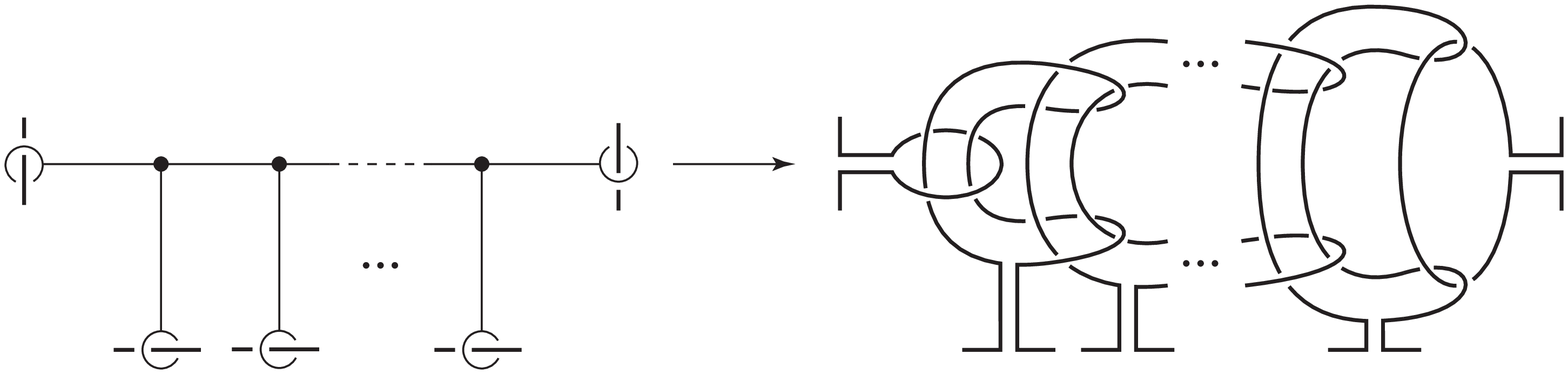}
      \put(143,52){surgery}
    \end{overpic}
  \caption{Surgery along a simple linear $C_{k}$-tree}
  \label{linear}
  \end{center}
\end{figure}

The {\em $C_{k}$-equivalence} is the equivalence relation on string links generated by surgery along $C_{k}$-trees and ambient isotopies. 
Habiro proved that two string links $\sigma$ and $\sigma'$ are $C_{k}$-equivalent if and only if there exists a disjoint union of simple $C_{k}$-trees $T_{1}\cup\cdots\cup T_{r}$ such that $\sigma'$ is ambient isotopic to $\sigma_{T_{1}\cup\cdots\cup T_{r}}$~\cite[Theorem 3.17]{H}.
This implies that surgery along any $C_{k}$-tree can be replaced with surgery along a disjoint union of {\em simple} $C_{k}$-trees. 
Hereafter, by a $C_{k}$-tree we mean a simple $C_{k}$-tree.

\subsection{Some technical lemmas}
This subsection gives some lemmas, which will be used to show Theorem~\ref{th-sl}. 

Given a $C_k$-tree $T$ for an $m$-component string link $\sigma=\sigma_{1}\cup\cdots\cup \sigma_{m}$, 
the set $\{\ i~\vline~\sigma_{i}\cap T \neq \emptyset, 1\leq i\leq m\}$ is called the {\em index} of $T$ and is denoted by ${\rm Ind}(T)$. 
The following is a direct consequence of \cite[Lemma 1.2]{FY}.

\begin{lemma}[{cf.~\cite[Lemma 1.2]{FY}}]\label{lem-self}
Let $T$ be a $C_{k}$-tree for a string link $\sigma$ with $|{\rm Ind}(T)|\leq k$. 
Then $\sigma_{T}$ is link-homotopic to $\sigma$. 
\end{lemma}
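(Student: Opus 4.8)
The plan is to obtain the conclusion as a pigeonhole reduction to the cited result \cite[Lemma~1.2]{FY}. Recall that, by the convention fixed in Section~\ref{sec-clasper}, every $C_k$-tree is simple, so $T$ has exactly $k+1$ disk-leaves and each of them meets $\sigma$ transversely in a single point. Hence $T$ meets $\sigma$ in precisely $k+1$ points, each lying on a component whose index belongs to ${\rm Ind}(T)$.

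First I would use the hypothesis $|{\rm Ind}(T)|\leq k$. The $k+1$ disk-leaves of $T$ then distribute their intersection points among at most $k$ distinct components of $\sigma$, so by the pigeonhole principle at least two disk-leaves, say $P$ and $Q$, intersect one and the same component $\sigma_{i_{0}}$. This is exactly the configuration handled by \cite[Lemma~1.2]{FY}: surgery along a tree clasper having two disk-leaves on a single component leaves the link-homotopy class unchanged. Applying that lemma to $T$ then gives that $\sigma_T$ is link-homotopic to $\sigma$, which is the assertion.

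The geometric content I would keep in mind, behind the invocation of \cite{FY}, is the following. After sliding $Q$ along $\sigma_{i_0}$ so that $P$ and $Q$ form an adjacent parallel pair of leaves, the local effect of the surgery becomes an interaction between two strands of the \emph{same} component $\sigma_{i_0}$; such an interaction is undone by crossing changes between those two strands, and a crossing change within a single component is a self-crossing change, hence permitted under link-homotopy. The only points needing care — and where I expect essentially the whole (small) difficulty to lie — are checking that the pigeonhole threshold matches the hypothesis exactly (with $k+1$ leaves this is immediate once the simple-tree convention fixes one intersection point per leaf) and that \cite[Lemma~1.2]{FY}, originally stated for links in $S^3$, transfers verbatim to string links in $\mathbb{D}^2\times[0,1]$, which it does since surgery along $T$ is supported in a ball and the argument is local.
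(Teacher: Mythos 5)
Your proposal is correct and takes essentially the same route as the paper, which offers no separate argument but states the lemma as a direct consequence of \cite[Lemma~1.2]{FY}: your pigeonhole observation (the $k+1$ disk-leaves of a simple $C_k$-tree meeting at most $k$ components forces two leaves on a single component) is exactly the implicit bridge to that cited lemma, and your sketch of its geometric content matches the standard argument. Your added check that the local, ball-supported nature of clasper surgery lets the lemma for links in $S^3$ transfer to string links in $\mathbb{D}^2\times[0,1]$ is a point the paper leaves tacit, and it is handled correctly.
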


The set of ambient isotopy classes of $m$-component string links has a monoid structure under the {\em stacking product} ``$*$'', and with the trivial $m$-component string link $\mathbf{1}_{m}$ as the unit element. 
Combining Lemma~\ref{lem-self} and \cite[Lemma 2.4]{Ytrans}, we have the following.

\begin{lemma}[{cf.~\cite[Lemma 2.4]{Ytrans}}]\label{lem-halftwist}
Let $T$ be a $C_{k}$-tree for $\mathbf{1}_{m}$, and let $\overline{T}$ be a $C_{k}$-tree obtained from $T$ by adding a half-twist on an edge. 
Then $(\mathbf{1}_{m})_{T}*(\mathbf{1}_{m})_{\overline{T}}$ is link-homotopic to $\mathbf{1}_{m}$. 
\end{lemma}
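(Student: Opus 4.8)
The plan is to realize the stacking product $(\mathbf{1}_m)_T * (\mathbf{1}_m)_{\overline{T}}$ as a single surgery on $\mathbf{1}_m$ along a disjoint union of two claspers, and then to cancel those two claspers modulo link-homotopy by clasper calculus. First I would place a copy of $T$ in the lower half $\mathbb{D}^2 \times [0,1/2]$ and a copy of $\overline{T}$ in the upper half $\mathbb{D}^2 \times [1/2,1]$; since $\mathbf{1}_m$ restricts to a trivial string link on each half, this exhibits the stacking product as $(\mathbf{1}_m)_{T \cup \overline{T}}$. Because the ambient string link is trivial, I would then isotope $\overline{T}$ to a parallel pushed-off copy of $T$ carrying one extra half-twist on the relevant edge, so that $T$ and $\overline{T}$ become two parallel $C_k$-trees differing exactly by a half-twist.

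Next I would apply \cite[Lemma 2.4]{Ytrans}. A half-twist on an edge reverses the clasper to leading order, so the two surgeries cancel modulo $C_{k+1}$: the lemma expresses $(\mathbf{1}_m)_{T \cup \overline{T}}$ as the result of surgery on $\mathbf{1}_m$ along a disjoint union of $C_{k+1}$-trees $T'_1,\dots,T'_r$, each obtained from $T$ by doubling one of its disk-leaves into two parallel disk-leaves. The structural feature I want to extract is that this doubling produces, in every $T'_s$, two disk-leaves lying on the same component of $\mathbf{1}_m$.

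Finally I would invoke Lemma~\ref{lem-self}. Each $T'_s$ is a $C_{k+1}$-tree, hence has $k+2$ disk-leaves; since two of them meet the same component, $T'_s$ meets at most $k+1$ distinct components, i.e. $|{\rm Ind}(T'_s)| \le k+1$. This is precisely the hypothesis of Lemma~\ref{lem-self} for a $C_{k+1}$-tree, so surgery along each $T'_s$ is a link-homotopy. Removing the $T'_s$ one at a time then carries $(\mathbf{1}_m)_{T \cup \overline{T}}$ to $\mathbf{1}_m$ by link-homotopy, which is the desired conclusion.

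The step I expect to be the main obstacle is controlling the index of the leftover $C_{k+1}$-trees. The cancellation of $T$ against $\overline{T}$ modulo $C_{k+1}$ is routine, but what genuinely matters is that the error terms are precisely leaf-doublings, so that each automatically carries a repeated index and thereby falls under Lemma~\ref{lem-self}; this is the geometric content packaged in \cite[Lemma 2.4]{Ytrans}, and the half-twist and framing bookkeeping underlying it is where the care is needed.
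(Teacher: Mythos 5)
Your proposal is correct and follows essentially the same route as the paper, which derives this lemma precisely by combining \cite[Lemma 2.4]{Ytrans} (cancellation of $T$ against its half-twisted parallel copy up to surgery along $C_{k+1}$-trees with controlled index) with Lemma~\ref{lem-self}. Your key observation---that each residual $C_{k+1}$-tree meets some component of $\mathbf{1}_{m}$ more than once, so $|{\rm Ind}|\leq k+1$ and Lemma~\ref{lem-self} applies---is exactly the point the paper's one-line proof is packaging.
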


By Lemma~\ref{lem-self} together with~\cite[Lemma 2.2 (2) and Remark 2.3]{MY12}, we have the following.

\begin{lemma}[{cf.~\cite[Lemma 2.2 (2) and Remark 2.3]{MY12}}]\label{lem-cc}
Let $T_{1}$ be a $C_{k}$-tree for a string link $\sigma$, 
and $T_{2}$ a $C_{l}$-tree for $\sigma$. 
Let $T'_{1}\cup T'_{2}$ be obtained from $T_{1}\cup T_{2}$ by changing a crossing of an edge of $T_{1}$ and that of $T_{2}$. 
Then $\sigma_{T_{1}\cup T_{2}}$ is link-homotopic to $\sigma_{T'_{1}\cup T'_{2}}$. 
\end{lemma}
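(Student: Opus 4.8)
The plan is to realize the passage from $\sigma_{T_1\cup T_2}$ to $\sigma_{T'_1\cup T'_2}$ by the insertion of a single auxiliary tree clasper, and then to delete that clasper up to link-homotopy by invoking Lemma~\ref{lem-self}. The geometric input for the first step is the clasper crossing-change identity \cite[Lemma 2.2(2)]{MY12}: changing a crossing between an edge of $T_1$ and an edge of $T_2$ alters the result of surgery only by surgery along one further tree clasper $Y$, obtained by joining $T_1$ and $T_2$ with a new edge at the site of the crossing. So I would first set up notation so that, after this identity, $\sigma_{T'_1\cup T'_2}$ is ambient isotopic to $(\sigma_{T_1\cup T_2})_{Y}$, thereby reducing the lemma to the single assertion that $(\sigma_{T_1\cup T_2})_{Y}$ is link-homotopic to $\sigma_{T_1\cup T_2}$.

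The second step is to verify that $Y$ meets the hypothesis of Lemma~\ref{lem-self}. Since $Y$ is assembled from $T_1$ and $T_2$ by fusing them at one vertex, its disk-leaves are those of $T_1$ and $T_2$ (with the two leaves at the fusion identified), so that ${\rm Ind}(Y)={\rm Ind}(T_1)\cup{\rm Ind}(T_2)$, while $Y$ is a $C_{k+l}$-tree, hence has $k+l+1$ disk-leaves. The content of \cite[Remark 2.3]{MY12} is that this fusion forces a repetition among the indices grabbed by the disk-leaves of $Y$, giving the bound $|{\rm Ind}(Y)|\le k+l$. Applying Lemma~\ref{lem-self} to the $C_{k+l}$-tree $Y$ for the string link $\sigma_{T_1\cup T_2}$ then yields that $(\sigma_{T_1\cup T_2})_{Y}$ is link-homotopic to $\sigma_{T_1\cup T_2}$, which is exactly the reduced assertion. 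I would treat \cite[Lemma 2.2(2)]{MY12} as a black box here rather than reprove the underlying clasper manipulation, and likewise borrow the index bookkeeping of Remark 2.3 verbatim.

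The hard part is concentrated entirely in the first step, namely in being certain that the crossing change is accounted for by \emph{exactly one} tree clasper of degree $k+l$, with no lower-degree or uncontrolled correction terms surviving, and that its index set is correctly identified as ${\rm Ind}(T_1)\cup{\rm Ind}(T_2)$ together with the forced repetition. This is precisely the delicate bookkeeping handled in \cite[Lemma 2.2(2) and Remark 2.3]{MY12}, so I would lean on those statements rather than redo the clasper calculus. Once the bound $|{\rm Ind}(Y)|\le k+l$ is available, Lemma~\ref{lem-self} closes the argument at once, and no further manipulation of $T_1$, $T_2$, or $Y$ is required.
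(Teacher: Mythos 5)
Your overall strategy coincides with the paper's: the paper proves this lemma precisely by combining \cite[Lemma 2.2 (2) and Remark 2.3]{MY12} with Lemma~\ref{lem-self}, and your two-step reduction (realize the crossing change by auxiliary tree claspers, then kill them by Lemma~\ref{lem-self}) is exactly that argument. However, your rendering of the black-boxed input is garbled in ways that break the proof as written. The auxiliary clasper produced by a crossing change between an \emph{edge} of $T_1$ and an \emph{edge} of $T_2$ is not a $C_{k+l}$-tree obtained by fusing two leaves; that structure (a single vertex joining $T_1$ to $T_2$ at the site of a leaf, consuming one leaf, hence $k+l+1$ disk-leaves) is what arises when a \emph{leaf} of $T_1$ slides over a leaf of $T_2$, i.e.\ the situation of \cite[Lemma 2.2 (1)]{MY12} and Lemma~\ref{lem-sliding} --- and note that in that situation the extra tree $Y$ does \emph{not} disappear up to link-homotopy, which is why Lemma~\ref{lem-sliding} keeps it in its conclusion. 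For an edge--edge crossing change the correction consists of $C_{k+l+1}$-trees (degree $k+l+1$, hence $k+l+2$ disk-leaves), obtained by inserting a vertex into each of the two edges and joining them; also one gets in general a disjoint union of such trees rather than exactly one, though that costs nothing since Lemma~\ref{lem-self} is applied tree by tree.

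More seriously, your index bound does not follow from what you assert. If ${\rm Ind}(Y)={\rm Ind}(T_1)\cup{\rm Ind}(T_2)$, nothing forces a repetition: take $T_1$ a $C_1$-tree with index $\{1,2\}$ and $T_2$ a $C_1$-tree with index $\{3,4\}$; then $|{\rm Ind}(T_1)\cup{\rm Ind}(T_2)|=4=k+l+2$, which violates your claimed bound $k+l$ and even the bound $k+l+1$ that Lemma~\ref{lem-self} requires for a $C_{k+l+1}$-tree. (Your claim fails already in your own leaf-fusion picture: for $T_1$ with index $\{1,2\}$ and $T_2$ with index $\{2,3\}$, the fused tree has index $\{1,2,3\}$ of cardinality $k+l+1>k+l$.) What actually closes the argument --- and what the citation of \cite[Remark 2.3]{MY12} supplies --- is that the $C_{k+l+1}$-trees arising from an edge--edge crossing change can be chosen with at least two leaves intersecting the same component of $\sigma$ (the join at the edges doubles the reading of a subtree on one side), so that each such tree $Y$ satisfies $|{\rm Ind}(Y)|\leq k+l+1$, which is exactly the hypothesis of Lemma~\ref{lem-self} in degree $k+l+1$. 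So the plan is right and matches the paper, but the decisive inequality is justified in your write-up by a mechanism that is false in general; you need the correct degree $k+l+1$ together with the repeated-leaf property of the edge-join trees, not the cardinality of ${\rm Ind}(T_1)\cup{\rm Ind}(T_2)$.
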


Here, by {\em parallel} tree claspers we mean a family of $r$ parallel copies of a tree clasper $T$ for some $r\geq1$. 
We call $r$ the {\em multiplicity} of the parallel clasper. 
The following can be proved by Lemma~\ref{lem-self} and ~\cite[Lemma 2.2 (1) and Remark 2.3]{MY12}.

\begin{lemma}[{cf.~\cite[Lemma 2.2 (1) and Remark 2.3]{MY12}}]\label{lem-sliding}
Let $T_{1}$ be a $C_{k}$-tree for a string link $\sigma$, 
and $T_{2}$ a parallel $C_{l}$-tree with multiplicity $r$ for $\sigma$. 
Let $T'_{1}\cup T'_{2}$ be obtained from $T_{1}\cup T_{2}$ by sliding a leaf $f$ of $T_{1}$ over $r$ parallel leaves of $T_{2}$ $($see Figure~$\ref{sliding}$$)$. 
Then $\sigma_{T_{1}\cup T_{2}}$ is link-homotopic to $\sigma_{T'_{1}\cup T'_{2}\cup Y}$, 
where $Y$ denotes the parallel $C_{k+l}$-tree with multiplicity $r$ obtained by inserting a vertex $v$ in the edge $e$ of $T_{2}$ and connecting $v$ to the edge incident to $f$ as illustrated in Figure~$\ref{sliding}$. 
\end{lemma}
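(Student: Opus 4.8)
The plan is to build the parallel slide out of single-leaf slides and to absorb every unwanted clasper into the trivial link-homotopy class by means of Lemma~\ref{lem-self}. The engine is the elementary move in which the leaf $f$ is pulled across a single leaf of one copy of $T_{2}$; I would apply this move to each of the $r$ leaf-crossings in turn and then collect the results, discarding whatever interaction terms the passages create.

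For the elementary move I would appeal to \cite[Lemma 2.2(1)]{MY12}: up to ambient isotopy, passing $f$ over a single leaf of one copy of $T_{2}$ carries the surgery result to the one in which an extra copy of the grafted $C_{k+l}$-tree has been inserted, namely the tree obtained by putting a node $v$ on the edge $e$ and joining $v$ to the edge incident to $f$ (see Figure~\ref{sliding}). Remark 2.3 of \cite{MY12} is used to record that the half-twists appearing on the edges of the new tree may be normalised to match the convention of the present paper. Throughout, Lemma~\ref{lem-cc} lets me pass the edges of the tree carrying $f$ freely through the edges of $T_{1}$, $T_{2}$ and any newly created claspers, which is harmless up to link-homotopy.

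The place where the parallelism of $T_{2}$ is used, and where Lemma~\ref{lem-self} does the real work, is in accounting for the interaction between the $r$ successive crossings. Naively the sweep of $f$ across all $r$ parallel leaves produces one copy of the grafted tree for each leaf; the crossings, however, also create auxiliary claspers obtained by grafting the tree carrying $f$ onto two or more of the $r$ parallel copies of $T_{2}$ at once. Since these copies are parallel they meet $\sigma$ in exactly the same set of components, so any clasper touching at least two of them has two disk-leaves on a common component of $\sigma$; its index therefore has size strictly smaller than its degree, and by Lemma~\ref{lem-self} the associated surgery is link-homotopic to the identity and may be deleted. What survives is precisely one grafted tree per leaf, and because these are grafted onto parallel copies of $T_{2}$ they are themselves parallel; their union is the parallel $C_{k+l}$-tree $Y$ of multiplicity $r$ in the statement, which gives that $\sigma_{T_{1}\cup T_{2}}$ is link-homotopic to $\sigma_{T'_{1}\cup T'_{2}\cup Y}$.

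The step I expect to be the main obstacle is exactly the bookkeeping of the previous paragraph: one must check that \emph{every} clasper produced by the slides, other than the intended parallel copies of $Y$, genuinely repeats a component index of $\sigma$, so that Lemma~\ref{lem-self} is in fact applicable. This is where the hypothesis that $T_{2}$ is parallel is indispensable, since it forces all ``multi-copy'' grafted trees to be index-repeating, and it is also where the restriction to link-homotopy, rather than ambient isotopy, is genuinely needed.
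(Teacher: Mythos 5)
Your proposal is correct and takes essentially the same route as the paper: the paper derives this lemma precisely by iterating the single-leaf sliding move of \cite[Lemma 2.2 (1) and Remark 2.3]{MY12} and then invoking Lemma~\ref{lem-self} to discard the interaction claspers, whose repeated component indices make their surgeries link-homotopically trivial, exactly as in your bookkeeping paragraph. The only content you add beyond the paper's citation-level argument is the explicit verification that every multi-copy grafted tree repeats an index of $\sigma$ (forced by the parallelism of $T_{2}$), which is indeed the point the paper implicitly delegates to Remark 2.3 of \cite{MY12} together with Lemma~\ref{lem-self}.
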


\begin{figure}[htbp]
  \begin{center}
    \vspace{1em}
    \begin{overpic}[width=10cm]{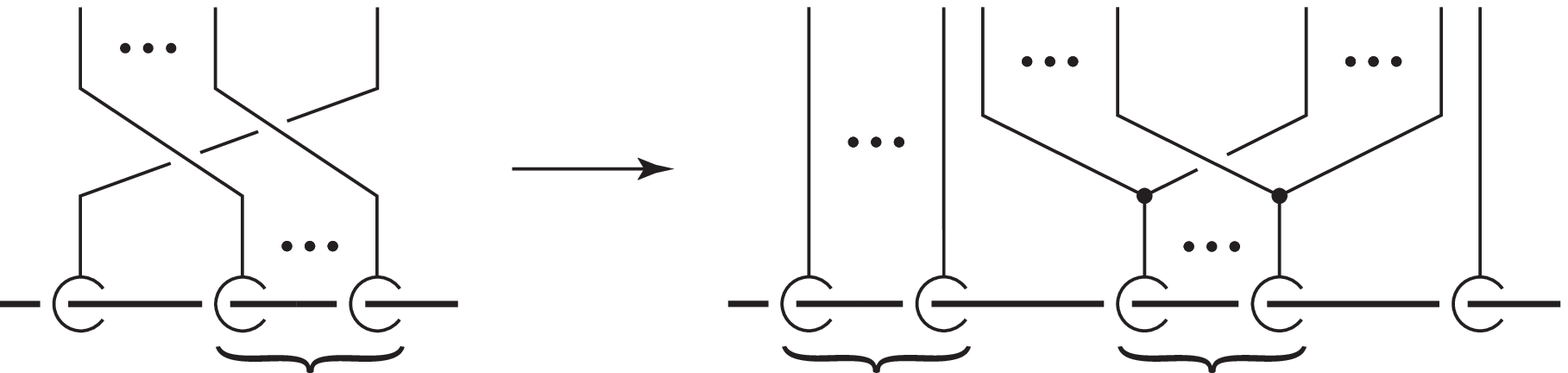}
      \put(23,71){$T_{2}$}
      \put(64,71){$T_{1}$}
      \put(-8,11){$\sigma$}
      \put(54.5,-8){$r$}
      \put(10,-3){$f$}
      \put(73,22){$e$}
      \put(217,71){$Y$}
      \put(237,27){$v$}
      \put(155,71){$T'_{2}$}
      \put(266,71){$T'_{1}$}
      \put(287,11){$\sigma$}
      \put(158,-8){$r$}
      \put(219,-8){$r$}
    \end{overpic}
  \vspace{1em}
  \caption{}
  \label{sliding}
  \end{center}
\end{figure}

\section{Proof of Theorem~\ref{th-sl}}
This section is devoted to the proof of Theorem~\ref{th-sl}. 

Habegger and Lin~\cite{HL} proved that Milnor link-homotopy invariants  classify string links up to link-homotopy. 
In~\cite{Ytrans}, the third author gave an alternative proof for this by using clasper theory. 
Actually, he constructed explicit representatives, determined by Milnor link-homotopy invariants, for the link-homotopy classes as follows. 
Let $\pi:\{1,\ldots,k\}\rightarrow\{1,\ldots,m\}$ $(2\leq k\leq m)$ be an injection such that $\pi(i)<\pi(k-1)<\pi(k)$ $(i=1,\ldots,k-2)$, 
and let $\mathcal{F}_{k}$ be the set of such injections. 
Given $\pi\in\mathcal{F}_{k}$, 
let $T_{\pi}$ and $\overline{T}_{\pi}$ be linear $C_{k-1}$-trees with index $\{\pi(1),\ldots,\pi(k)\}$ illustrated in the left- and right-hand side of Figure~\ref{representative}, respectively. 
Here, Figure~\ref{representative} describes the images of homeomorphisms from neighborhood of $T_{\pi}$ and $\overline{T}_{\pi}$ to the $3$-ball. 
Setting $V_{\pi}=(\mathbf{1}_{m})_{T_{\pi}}$ and $V^{-1}_{\pi}=(\mathbf{1}_{m})_{\overline{T}_{\pi}}$, we have the following.

\begin{figure}[htbp]
  \begin{center}
    \vspace{1em}
    \begin{overpic}[width=11cm]{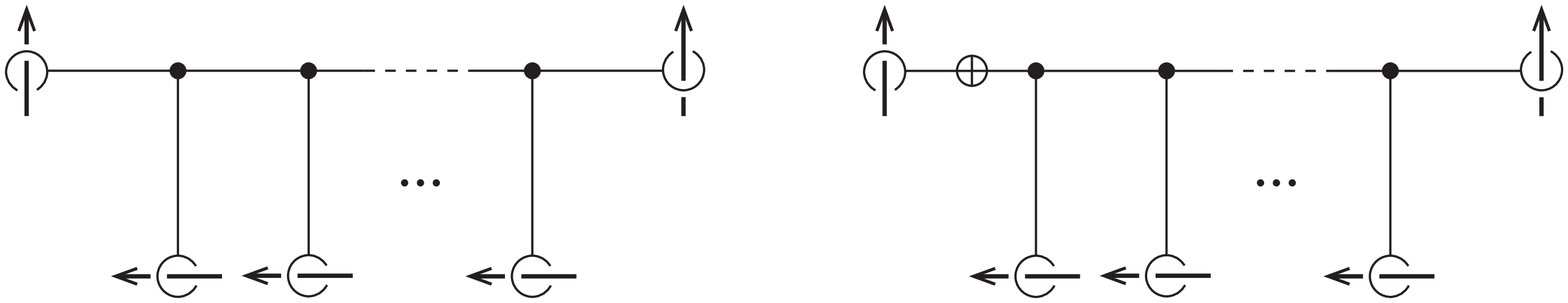}
      \put(70,54){$T_{\pi}$}
      \put(241,54){$\overline{T}_{\pi}$}
      \put(-3,63){$\pi(k)$}
      \put(118,63){$\pi(k-1)$}
      \put(12,-7){$\pi(1)$}
      \put(38,-7){$\pi(2)$}
      \put(111,-7){$\pi(k-2)$}
      \put(169,63){$\pi(k)$}
      \put(290,63){$\pi(k-1)$}
      \put(185,-7){$\pi(1)$}
      \put(210,-7){$\pi(2)$}
      \put(282,-7){$\pi(k-2)$}
    \end{overpic}
  \vspace{1em}
  \caption{Linear $C_{k-1}$-trees $T_{\pi}$ and $\overline{T}_{\pi}$ with index $\{\pi(1),\ldots,\pi(k)\}$}
  \label{representative}
  \end{center}
\end{figure}

\begin{theorem}[{\cite[Theorem~4.3]{Ytrans}}]\label{th-representative}
Let $\sigma$ be an $m$-component string link. 
Then $\sigma$ is link-homotopic to $\sigma_{1}*\cdots*\sigma_{m-1}$, where for each $k$,  
\[
\sigma_{k}=\prod_{\pi\in\mathcal{F}_{k+1}}V_{\pi}^{x_{\pi}}, 
\]
\[x_{\pi}=\left\{
\begin{array}{lll}
\mu_{\sigma}(\pi(1)\pi(2)) & (k=1), \\
\mu_{\sigma}(\pi(1)\ldots\pi(k+1))
-\mu_{\sigma_{1}*\cdots*\sigma_{k-1}}(\pi(1)\ldots\pi(k+1))
& (k\geq 2). 
\end{array}
\right.
\]
\end{theorem}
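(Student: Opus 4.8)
The plan is to construct $\sigma$ "degree by degree" by surgery along $C_k$-trees and then invoke the Habegger--Lin classification~\cite{HL}: two string links are link-homotopic if and only if all of their Milnor $\mu$-invariants for non-repeated sequences coincide. It therefore suffices to show that the stacking product $\sigma_{1}*\cdots*\sigma_{m-1}$ realizes the same non-repeated $\mu$-invariants as $\sigma$, the formula for $x_{\pi}$ being engineered precisely so that this holds at each length.

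First I would record the behaviour of the standard building blocks. For $\pi\in\mathcal{F}_{k+1}$, the string link $V_{\pi}=(\mathbf{1}_{m})_{T_{\pi}}$ is obtained by surgery along a simple linear $C_{k}$-tree with index $\{\pi(1),\ldots,\pi(k+1)\}$; as indicated in Figure~\ref{linear}, this is a band sum of $\mathbf{1}_{m}$ with the $(k+1)$-component Milnor link, so its first non-vanishing non-repeated invariant satisfies $\mu_{V_{\pi}}(\pi(1)\cdots\pi(k+1))=1$, while every other non-repeated invariant of length $\leq k+1$ vanishes. Combined with the additivity of the first non-vanishing Milnor invariants under the stacking product, the theorem reduces to an inductive matching statement, proved by induction on $k$: the string link $\sigma$ is link-homotopic to $\sigma_{1}*\cdots*\sigma_{k}*\rho_{k}$, where $\rho_{k}$ is $C_{k+1}$-equivalent to $\mathbf{1}_{m}$ (the base case being $\rho_{0}=\sigma$, which is $C_{1}$-equivalent to $\mathbf{1}_{m}$).

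For the inductive step I would start from $\sigma$ being link-homotopic to $\sigma_{1}*\cdots*\sigma_{k-1}*\rho_{k-1}$ with $\rho_{k-1}$ being $C_{k}$-equivalent to $\mathbf{1}_{m}$. By Habiro's realization theorem~\cite{H}, $\rho_{k-1}$ is link-homotopic to $(\mathbf{1}_{m})_{T_{1}\cup\cdots\cup T_{r}}$ for some simple $C_{k}$-trees $T_{i}$. Lemma~\ref{lem-self} lets me discard every $T_{i}$ with $|{\rm Ind}(T_{i})|\leq k$, so only trees hitting $k+1$ distinct components survive; these correspond to non-repeated length-$(k+1)$ sequences. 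Then, using Lemma~\ref{lem-cc} (crossing changes between edges) and Lemma~\ref{lem-sliding} (leaf slides), I would deform each surviving $C_{k}$-tree into one of the standard linear trees $T_{\pi}$ or $\overline{T}_{\pi}$ at the cost of introducing only $C_{k+1}$-trees, with Lemma~\ref{lem-halftwist} converting a half-twisted linear tree into $V_{\pi}^{-1}$. Collecting terms shows $\rho_{k-1}$ is link-homotopic to $\sigma_{k}*\rho_{k}$ with $\rho_{k}$ a product of surgeries along $C_{k+1}$-trees. Reading off the multiplicity of each $V_{\pi}$ via its length-$(k+1)$ invariant, and subtracting the contribution already supplied by $\sigma_{1}*\cdots*\sigma_{k-1}$, yields exactly $x_{\pi}=\mu_{\sigma}(\pi(1)\cdots\pi(k+1))-\mu_{\sigma_{1}*\cdots*\sigma_{k-1}}(\pi(1)\cdots\pi(k+1))$.

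The induction terminates at $k=m-1$, since a $C_{m}$-tree has $m+1$ disk-leaves but meets at most $m$ components, whence $|{\rm Ind}|\leq m=k$ and Lemma~\ref{lem-self} forces $\rho_{m-1}$ to be link-homotopic to $\mathbf{1}_{m}$; thus $\sigma$ is link-homotopic to $\sigma_{1}*\cdots*\sigma_{m-1}$. I expect the main obstacle to be the standardization step: normalizing an arbitrary simple $C_{k}$-tree of index size $k+1$ into the specific linear representative $T_{\pi}$ while verifying that every discrepancy is either link-homotopically trivial (Lemmas~\ref{lem-self} and~\ref{lem-cc}) or genuinely of degree $k+1$ (Lemma~\ref{lem-sliding}), so that it can be absorbed into $\rho_{k}$ without disturbing the lower-degree invariants already fixed.
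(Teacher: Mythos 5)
Your proposal is correct and reconstructs essentially the argument of the paper's cited source---the paper itself gives no proof, quoting \cite[Theorem~4.3]{Ytrans}, whose proof is exactly your induction: Habiro's realization of $C_k$-equivalence by simple $C_k$-trees, elimination of trees with $|{\rm Ind}|\leq k$ by Lemma~\ref{lem-self}, standardization to the linear trees $T_{\pi}$, $\overline{T}_{\pi}$ modulo $C_{k+1}$-trees via Lemmas~\ref{lem-cc}, \ref{lem-sliding} and \ref{lem-halftwist}, and determination of the exponents from length-$(k+1)$ invariants using additivity under stacking. One small imprecision worth fixing: it is not true that every non-repeated invariant of $V_{\pi}$ of length $k+1$ other than $\mu_{V_{\pi}}(\pi(1)\cdots\pi(k+1))$ vanishes (cyclic permutations and reversals of the sequence give values $\pm1$); what your reading-off step actually requires, and what does hold, is $\mu_{V_{\pi}}(\pi'(1)\cdots\pi'(k+1))=\delta_{\pi\pi'}$ for $\pi,\pi'\in\mathcal{F}_{k+1}$, since the normalization $\pi(i)<\pi(k)<\pi(k+1)$ ensures that no sequence indexed by $\mathcal{F}_{k+1}$ is a cyclic rotation of another such sequence or of its reverse.
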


\vspace{0.5em}
The following is the key lemma to show Theorem~\ref{th-sl}.

\begin{lemma}\label{lem-del-clasper}
Let $n$ be a positive integer and $\e\in\{1,-1\}$.  
Then, for any $\pi\in\mathcal{F}_{k+1}$ $(1\leq k\leq m-1)$, $V_{\pi}^{\e n}$ is $(2n+{\rm lh})$-equivalent to $\mathbf{1}_{m}$. 
\end{lemma}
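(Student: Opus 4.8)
The plan is to argue by induction on the degree $k$ of the linear tree defining $V_{\pi}=(\mathbf{1}_{m})_{T_{\pi}}$. Since stacking surgeries along tree claspers lying in disjoint slabs realizes the stacking product, for $\e=1$ the string link $V_{\pi}^{n}$ is $(\mathbf{1}_{m})$ surgered along $n$ parallel copies of the linear $C_{k}$-tree $T_{\pi}$; the case $\e=-1$ is the same with $T_{\pi}$ replaced by $\overline{T}_{\pi}$, so by Lemma~\ref{lem-halftwist} it suffices to treat one sign and carry the other sign through the induction. The base case $k=1$ is the crux that links claspers to $2n$-moves: here $\pi\in\mathcal{F}_{2}$ and $T_{\pi}$ is a simple $C_{1}$-tree whose two disk-leaves grab the strands $\pi(1)$ and $\pi(2)$. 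Surgery along one such clasper inserts a full twist, i.e. two crossings, between these strands, so surgery along $n$ parallel copies inserts exactly $2n$ crossings between two parallel strands belonging to different components. This is precisely the local picture of a single $2n$-move applied to $\mathbf{1}_{m}$ (Figure~\ref{n-move}), whence $V_{\pi}^{\e n}$ is $2n$-move equivalent, and a fortiori $(2n+{\rm lh})$-equivalent, to $\mathbf{1}_{m}$.

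For the inductive step I would assume the statement for all elements of $\mathcal{F}_{k}$ and both signs, and realize the linear $C_{k}$-tree $T_{\pi}$ by the leaf-slide construction of Lemma~\ref{lem-sliding}: choose a $C_{1}$-tree $C$ and a linear $C_{k-1}$-tree $T'$ of index $\{\pi(1),\dots,\pi(k)\}$ so that sliding a leaf of $C$ over the $n$ parallel leaves of $nT'$ produces, together with displaced copies $C'$ and $nT''$, the parallel $C_{k}$-tree $nT_{\pi}$. Applying Lemma~\ref{lem-sliding} with $T_{1}=C$ and $T_{2}=nT'$ gives
\[
(\mathbf{1}_{m})_{C\cup nT'}\ \text{link-homotopic to}\ (\mathbf{1}_{m})_{C'\cup nT''\cup nT_{\pi}}.
\]
Next I would use Lemma~\ref{lem-cc} repeatedly to change crossings between the edges of $nT_{\pi}$ and those of $C'\cup nT''$, pushing $nT_{\pi}$ into a disjoint slab without altering the link-homotopy class; the same separation applied to $C\cup nT'$ and to $C'\cup nT''$ rewrites both leftover terms, up to link-homotopy, as $(\mathbf{1}_{m})_{C}*(\mathbf{1}_{m})_{nT'}$ and $(\mathbf{1}_{m})_{C'}*(\mathbf{1}_{m})_{nT''}$.

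By the induction hypothesis, $(\mathbf{1}_{m})_{nT'}$ and $(\mathbf{1}_{m})_{nT''}$ — being $n$ parallel copies of linear $C_{k-1}$-trees whose index lies in $\mathcal{F}_{k}$, with any internal half-twists absorbed by Lemma~\ref{lem-halftwist} — are $(2n+{\rm lh})$-equivalent to $\mathbf{1}_{m}$. Once these degree-$(k-1)$ families are split off, $C$ and $C'$ are isotopic $C_{1}$-trees joining the same pair of strands, so $(\mathbf{1}_{m})_{C}$ and $(\mathbf{1}_{m})_{C'}$ agree up to ambient isotopy. Feeding these identifications into the displayed relation and using that link-homotopy classes of string links form a group under the stacking product~\cite{HL}, with $2n$-moves compatible with that product, I can cancel the common factor and conclude that $V_{\pi}^{n}$ is $(2n+{\rm lh})$-equivalent to $\mathbf{1}_{m}$, closing the induction.

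The main obstacle I expect is the bookkeeping of the leaf-slide in the inductive step: one must arrange $C$ and $T'$ so that the slide genuinely reproduces $T_{\pi}$ (up to half-twists), and so that the displaced pieces $C'$ and $T''$ differ from $C$ and $T'$ only by an isotopy after the degree-$(k-1)$ families are removed. Checking that the crossing changes of Lemma~\ref{lem-cc} introduce no new claspers and leave all indices intact — so that Lemma~\ref{lem-self} keeps every stray high-index tree link-homotopically trivial throughout — is the delicate point; the remaining manipulations are routine clasper calculus.
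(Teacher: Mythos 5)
Your mechanism is the right one and draws on the same toolbox, but your architecture differs genuinely from the paper's, and the inductive step as written has a real gap. The paper's proof is \emph{not} an induction: it starts from the pair $\overline{T}_{1}\cup T_{1}$, inserted for free by Lemma~\ref{lem-halftwist} (here $T_{1}$ is a linear $C_{k-1}$-tree of index $\{\pi(1),\ldots,\pi(k)\}$), creates the multiplicity-$n$ $C_{1}$-family $T_{2}$ by a single $2n$-move, and slides the leaf of $T_{1}$ (degree $k-1$, multiplicity one) over the $n$ parallel leaves of $T_{2}$; the correction term $Y$ of Lemma~\ref{lem-sliding} is then precisely the multiplicity-$n$ family realizing $V_{\pi}^{n}$, and the leftovers are erased \emph{geometrically} --- $\overline{T}_{1}\cup T'_{1}$ by Lemma~\ref{lem-halftwist}, $T'_{2}$ by a second $2n$-move --- so all $k\geq 2$ are handled uniformly in one step. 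You instead swap the roles in Lemma~\ref{lem-sliding} (sliding a $C_{1}$-tree over the multiplicity-$n$ degree-$(k-1)$ family) and cancel the leftovers \emph{algebraically} in the group of $(2n+{\rm lh})$-equivalence classes. That cancellation is legitimate and not circular (the group structure follows from~\cite{HL} independently of this lemma, as noted before Corollary~\ref{cor-group}), but it forces you into an induction the paper avoids, because you must dispose of $nT'$ and $nT''$ by the inductive hypothesis rather than by a half-twisted partner.

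And there the gap appears: your hypothesis covers $V_{\rho}^{\pm n}$ only for $\rho\in\mathcal{F}_{k}$, i.e.\ for the normalized linear pattern of Figure~\ref{representative}, in which the two largest indices occupy the two ends of the tree. Your $T'$ and $T''$ arise from $T_{\pi}$ by deleting an end leaf, and since for $\pi\in\mathcal{F}_{k+1}$ the values $\pi(1),\ldots,\pi(k-1)$ are in arbitrary relative order, the truncated pattern is in general \emph{not} $T_{\rho}$ for any $\rho\in\mathcal{F}_{k}$; ``index lies in $\mathcal{F}_{k}$'' conflates index \emph{sets} with the normalized injections, so the hypothesis does not apply to $(\mathbf{1}_{m})_{nT'}$ as stated. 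The repair is cheap --- strengthen the induction to all simple linear $C_{k-1}$-trees whose index consists of $k$ distinct components, since neither your base case nor your step uses the normalization --- but it must be made. A second point you defer cannot be left as routine: exchanging a leaf of $C'$ with a leaf of a copy of $T''$ during separation would, by Lemma~\ref{lem-sliding}, create a degree-$k$ correction tree whose index may have $k+1$ distinct elements --- an object of exactly the kind being proved trivial, which Lemma~\ref{lem-self} cannot kill, threatening circularity. You must arrange the initial position so this exchange never occurs; this is possible (stack $C$ in a slab below $nT'$, so the left-hand side needs no separation at all; after the slide, $C'$ lies above the $n$ parallel leaves on the shared strand, and $T''$ has no leaf on the strand of $C'$'s other leaf, so $C'$ extracts using only the edge crossing changes of Lemma~\ref{lem-cc}). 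The remaining corrections, from extracting the $Y$-family past $T''$, past $C'$, or past another copy of itself, have degree at least $k+1$ while their index lies in the $k+1$ strands $\pi(1),\ldots,\pi(k+1)$, so $|{\rm Ind}|\leq{}$degree and Lemma~\ref{lem-self} disposes of them. With these two repairs your induction closes; the paper's $\overline{T}_{1}$-trick is precisely what lets it sidestep both issues.
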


\begin{proof}
Since $V_{\pi}^{-n}*V_{\pi}^{n}$ is link-homotopic to $\mathbf{1}_{m}$ by Lemma~\ref{lem-halftwist}, 
it is enough to show the case $\e=1$, 
i.e. for any $\pi\in\mathcal{F}_{k+1}$, $V_{\pi}^{n}$ is $(2n+{\rm lh})$-equivalent to $\mathbf{1}_{m}$. 
For the case $k=1$, we see that $V_{\pi}^{n}$ and $\mathbf{1}_{m}$ are related by a single $2n$-move. 

Assume that $k\geq2$. 
Let $T_{1}$ be the linear $C_{k-1}$-tree  for $\mathbf{1}_{m}$ of Figure~\ref{del-clasper}~(a) with index $\{\pi(1),\ldots,\pi(k)\}$, 
and let $\overline{T}_{1}$ be obtained from $T_{1}$ by adding a positive half-twist on an edge. 
Then $\mathbf{1}_{m}$ is link-homotopic to $(\mathbf{1}_{m})_{\overline{T}_{1}\cup T_{1}}$ by Lemma~\ref{lem-halftwist}. 
Let $T_{2}$ be the parallel $C_{1}$-tree of Figure~\ref{del-clasper}~(b) with multiplicity $n$. 
Since surgery along $T_{2}$ is realized by a $2n$-move, 
$(\mathbf{1}_{m})_{\overline{T}_{1}\cup T_{1}}$ is $2n$-move equivalent to $(\mathbf{1}_{m})_{\overline{T}_{1}\cup T_{1}\cup T_{2}}$ in Figure~\ref{del-clasper}~(b). 
Let $T'_{1}\cup T'_{2}$ be obtained from $T_{1}\cup T_{2}$ by 
sliding a leaf of $T_{1}$ over $n$ parallel leaves of $T_{2}$, 
and let $Y$ be the parallel $C_{k}$-tree with multiplicity~$n$ as illustrated in Figure~\ref{del-clasper}~(c). 
It follows from Lemmas~\ref{lem-cc} and~\ref{lem-sliding} that 
$(\mathbf{1}_{m})_{\overline{T}_{1}\cup T_{1}\cup T_{2}}$ is link-homotopic to $(\mathbf{1}_{m})_{\overline{T}_{1}\cup T'_{1}\cup T'_{2}\cup Y}$. 
Furthermore, by Lemma~\ref{lem-halftwist}, 
$(\mathbf{1}_{m})_{\overline{T}_{1}\cup T'_{1}\cup T'_{2}\cup Y}$ is $(2n+{\rm lh})$-equivalent to $(\mathbf{1}_{m})_{Y}=V_{\pi}^{n}$. 
This completes the proof. 
\end{proof}

\begin{figure}[htbp]
  \begin{center}
    \vspace{1em}
    \begin{overpic}[width=7cm]{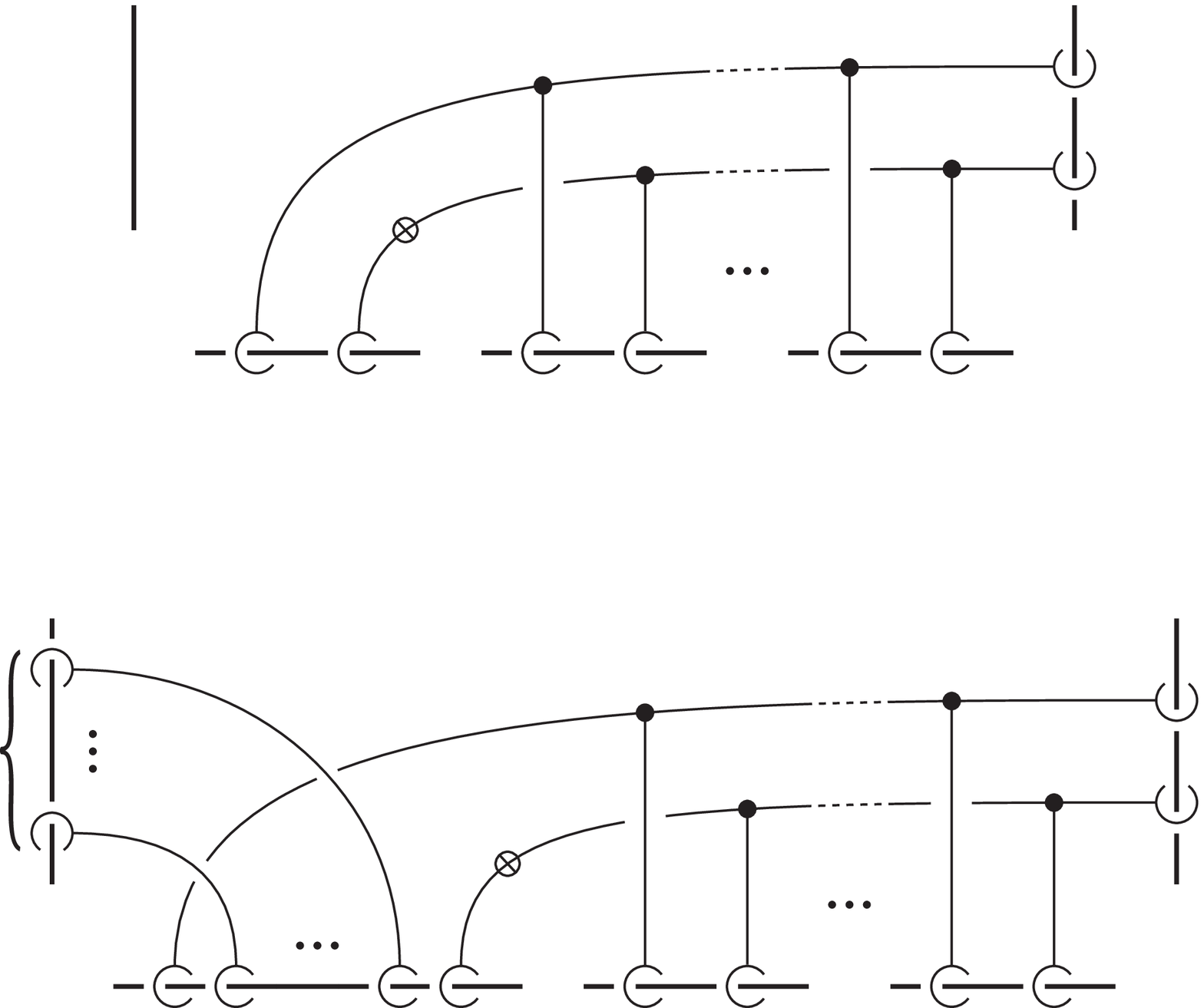}
      \put(94,85){(a)}
      \put(94,-15){(b)}
      \put(6,171){{\small $\pi(k+1)$}}
      \put(171,171){{\small $\pi(k)$}}
      \put(22,99){{\small $\pi(1)$}}
      \put(70,99){{\small $\pi(2)$}}
      \put(163,99){{\small $\pi(k-1)$}}
      \put(100,159){$T_{1}$}
      \put(111,143){$\overline{T}_{1}$}
      \put(116,53){$T_{1}$}
      \put(127,38){$\overline{T}_{1}$}
      \put(36,53){$T_{2}$}
      \put(-9,40.5){$n$}
    \end{overpic}
    
    \vspace{4em}
    \begin{overpic}[width=10cm]{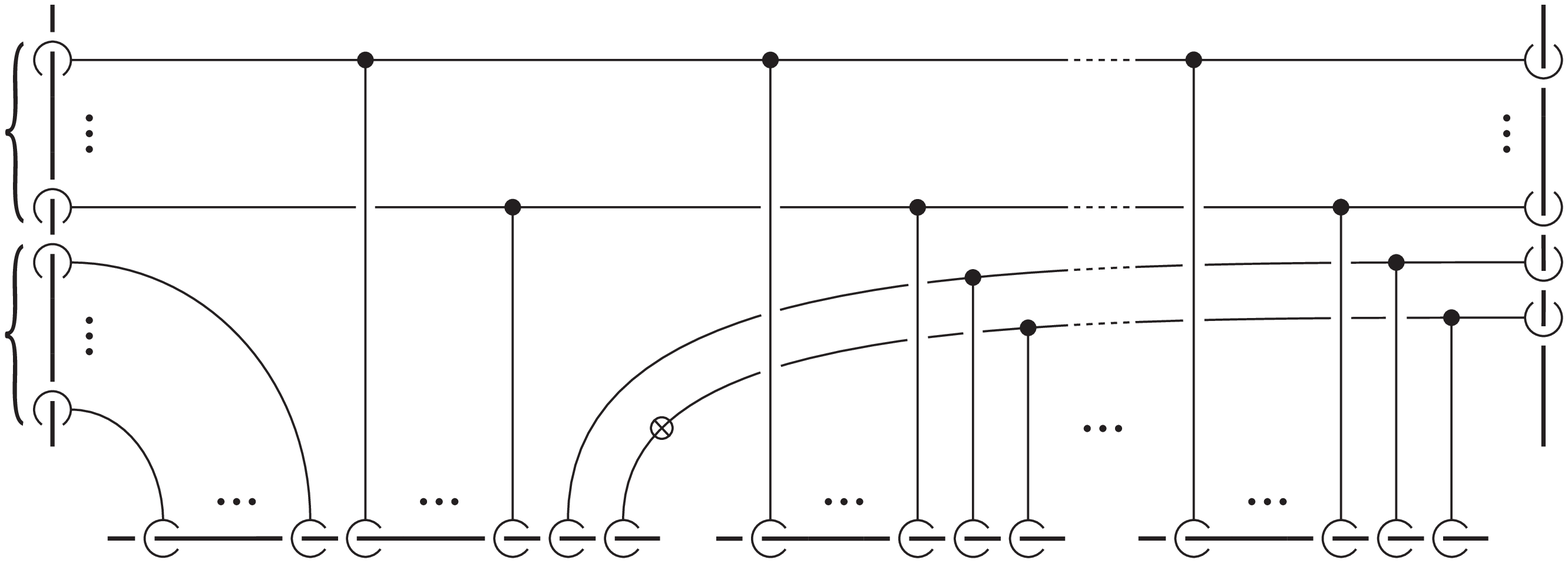}
      \put(140,-15){(c)}
      \put(165,93){$Y$}
      \put(112,41){$T'_{1}$}
      \put(125,18){$\overline{T}_{1}$}
      \put(45,41){$T'_{2}$}
      \put(-9,75){$n$}
      \put(-9,39){$n$}
    \end{overpic}
  \vspace{1em}
  \caption{}
  \label{del-clasper}
  \end{center}
\end{figure}

Combining Theorem~\ref{th-representative} and Lemma~\ref{lem-del-clasper}, 
we give a complete list of representatives for string links up to $(2n+{\rm lh})$-equivalence as follows.

\begin{proposition}\label{prop-rep-2nlh}
Let $\sigma$ be an $m$-component string link 
and $x_{\pi}$ as in Theorem~$\ref{th-representative}$. 
Then $\sigma$ is $(2n+{\rm lh})$-equivalent to $\tau_{1}*\cdots*\tau_{m-1}$, where for each $k$, 
\[
\tau_{k}=\prod_{\pi\in\mathcal{F}_{k+1}}V_{\pi}^{y_{\pi}} 
\]
with $0\leq y_{\pi}<n$ and $y_{\pi}\equiv x_{\pi}\pmod{n}$. 
\end{proposition}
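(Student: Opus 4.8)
The plan is to start from the link-homotopy representative supplied by Theorem~\ref{th-representative} and then reduce each exponent modulo $n$ by discarding full powers $V_{\pi}^{\pm n}$ via Lemma~\ref{lem-del-clasper}. Since link-homotopy is a special case of $(2n+{\rm lh})$-equivalence, Theorem~\ref{th-representative} already gives that $\sigma$ is $(2n+{\rm lh})$-equivalent to $\sigma_{1}*\cdots*\sigma_{m-1}$ with $\sigma_{k}=\prod_{\pi\in\mathcal{F}_{k+1}}V_{\pi}^{x_{\pi}}$. It therefore remains to show that this product is $(2n+{\rm lh})$-equivalent to $\tau_{1}*\cdots*\tau_{m-1}$, which amounts to replacing each factor $V_{\pi}^{x_{\pi}}$ by $V_{\pi}^{y_{\pi}}$.

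The first step I would record is that $(2n+{\rm lh})$-equivalence is a congruence with respect to the stacking product: if $\tau$ is $(2n+{\rm lh})$-equivalent to $\tau'$, then $\tau*\rho$ is $(2n+{\rm lh})$-equivalent to $\tau'*\rho$ for any $\rho$, and likewise on the other side. This is immediate because every generating move ($2n$-move, self-crossing change, ambient isotopy) is supported in a ball and can be performed inside the factor $\tau$ without meeting $\rho$. Granting this, I may treat the factors one at a time.

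For a fixed $\pi\in\mathcal{F}_{k+1}$, write $x_{\pi}=y_{\pi}+q_{\pi}n$ with $0\leq y_{\pi}<n$ and $q_{\pi}\in\mathbb{Z}$. Because powers of a single string link under the stacking product simply concatenate, $V_{\pi}^{x_{\pi}}=V_{\pi}^{y_{\pi}}*V_{\pi}^{q_{\pi}n}$, and $V_{\pi}^{q_{\pi}n}$ is a stacking of $|q_{\pi}|$ copies of $V_{\pi}^{\e n}$ with $\e={\rm sign}(q_{\pi})$. Applying Lemma~\ref{lem-del-clasper} to each copy, together with the congruence property, gives that $V_{\pi}^{q_{\pi}n}$ is $(2n+{\rm lh})$-equivalent to $\mathbf{1}_{m}$, whence $V_{\pi}^{x_{\pi}}$ is $(2n+{\rm lh})$-equivalent to $V_{\pi}^{y_{\pi}}$. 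Substituting this replacement into each factor of each $\sigma_{k}$, in the same order, converts $\sigma_{1}*\cdots*\sigma_{m-1}$ into $\tau_{1}*\cdots*\tau_{m-1}$, as desired.

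The argument is essentially bookkeeping, so I expect no serious obstacle; the only points requiring care are the congruence property of $(2n+{\rm lh})$-equivalence under stacking and the observation that distinct factors never need to be commuted, since each $V_{\pi}^{x_{\pi}}$ is rewritten in place and the fixed order of the product over $\mathcal{F}_{k+1}$ (and over $k$) is preserved throughout.
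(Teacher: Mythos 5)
Your route is the paper's own: start from Theorem~\ref{th-representative}, note that link-homotopy is contained in $(2n+{\rm lh})$-equivalence, and reduce each exponent modulo~$n$ in place by deleting full powers $V_{\pi}^{\pm n}$ via Lemma~\ref{lem-del-clasper}, using that the equivalence is a congruence for the stacking product (a point the paper leaves implicit and you rightly make explicit). The one step that is not correct as written is the identity $V_{\pi}^{x_{\pi}}=V_{\pi}^{y_{\pi}}*V_{\pi}^{q_{\pi}n}$. The set of string links under stacking is only a monoid, and $V_{\pi}^{-1}$ is by \emph{definition} $(\mathbf{1}_{m})_{\overline{T}_{\pi}}$, which is an inverse of $V_{\pi}$ only up to link-homotopy (Lemma~\ref{lem-halftwist}), not up to ambient isotopy. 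So when $x_{\pi}<0$ with $n\nmid x_{\pi}$, forcing $y_{\pi}>0$ and $q_{\pi}<0$, your decomposition mixes positive and negative formal powers: $V_{\pi}^{x_{\pi}}$ is $|x_{\pi}|$ copies of $(\mathbf{1}_{m})_{\overline{T}_{\pi}}$, while $V_{\pi}^{y_{\pi}}*V_{\pi}^{q_{\pi}n}$ has $y_{\pi}$ copies of $V_{\pi}$ stacked against $|q_{\pi}|n$ copies of $V_{\pi}^{-1}$, and the two sides agree only after cancelling $y_{\pi}$ pairs $V_{\pi}*V_{\pi}^{-1}$ up to link-homotopy. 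This is precisely why the paper's proof invokes Lemma~\ref{lem-halftwist} alongside Lemma~\ref{lem-del-clasper}: one needs both to insert/delete $V_{\pi}^{\pm n}$ \emph{and} to remove pairs $V_{\pi}^{\e}*V_{\pi}^{-\e}$ up to $(2n+{\rm lh})$-equivalence.

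The gap is small and entirely repairable within your toolkit: since link-homotopy is part of the $(2n+{\rm lh})$-equivalence, one additional citation of Lemma~\ref{lem-halftwist} for the mixed-sign case makes your ``equality'' a valid $(2n+{\rm lh})$-equivalence, and with that emendation your argument coincides with the paper's proof.
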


\begin{proof}
It follows from Theorem~\ref{th-representative} that $\sigma$ is link-homotopic to $\sigma_{1}*\cdots*\sigma_{m-1}$, 
where
\[
\sigma_{k}=\prod_{\pi\in\mathcal{F}_{k+1}}V_{\pi}^{x_{\pi}}.
\] 
By Lemmas~\ref{lem-del-clasper} and~\ref{lem-halftwist}, 
we can insert/delete $V_{\pi}^{\pm n}$ and remove $V_{\pi}^{\e}*V_{\pi}^{-\e}$ up to $(2n+{\rm lh})$-equivalence $(\e\in\{1,-1\})$. 
Therefore, 
$\sigma_{k}$ is $(2n+{\rm lh})$-equivalent to $\tau_{k}$ for each $k$. 
\end{proof}

\begin{proof}[Proof of Theorem~$\ref{th-sl}$]
This follows from Theorem~\ref{th-invariance} and Proposition~\ref{prop-rep-2nlh}. 
\end{proof}

\begin{proof}[Proof of Corollary~$\ref{cor-group}$]
By combining Theorem~\ref{th-sl}, Lemma~\ref{lem-del-clasper} and Proposition~\ref{prop-rep-2nlh},
we have the corollary. 
\end{proof}

\begin{remark}
Theorem~\ref{th-sl} characterizes Milnor link-homotopy invariants modulo~$n$ by two local moves, the $2n$-move and self-crossing change. 
In~\cite{ABMW}, B.~Audoux, P.~Bellingeri, J.-B.~Meilhan and E.~Wagner defined Milnor invariants, denoted by~$\mu^{{\rm w}}$, for {\em welded string links} 
and proved that $\mu^{{\rm w}}$-invariants for non-repeated sequences classify welded string links up to {\em self-crossing virtualization}. 
(Later, this classification led to a link-homotopy classification of {\em $2$-dimensional string links} in $4$-space~\cite{AMW}). 
For welded string links, we can show a similar result to Theorem~\ref{th-sl} that characterizes $\mu^{{\rm w}}$-invariants for non-repeated sequences modulo $n$ in terms of the $2n$-move and self-crossing virtualization. 
While the idea of the proof is similar to that of Theorem~\ref{th-sl}, we need {\em arrow calculus} and representatives for welded string links up to self-crossing virtualization given in~\cite{MY19} 
instead of clasper calculus and representatives for string links up to link-homotopy. 
We will give the details in a future paper. 
\end{remark}

\section{Links in $S^{3}$}\label{sec-link}
In the previous sections, we have studied {\em string links}. 
We now address the case of {\em links} in $S^{3}$. 

Given an $m$-component string link $\sigma$, its {\em closure} is an $m$-component link in $S^{3}$ obtained from $\sigma$ by identifying points on the boundary of $\mathbb{D}^{2}\times [0,1]$ with their images under the projection $\mathbb{D}^{2}\times [0,1]\rightarrow \mathbb{D}^{2}$. 
The link inherits an ordering and orientation from $\sigma$. 
Note that every link can be represented by the closure of some string link. 

Habegger and Lin proved that for two link-homotopic links $L$ and $L'$, and for a string link $\sigma$ whose closure is $L$, there exists a string link $\sigma'$ whose closure is $L'$ such that $\sigma'$ is link-homotopic to $\sigma$~\cite[Lemma 2.5]{HL}.  
Similarly we have the following.

\begin{lemma}\label{lem-2nlh}
Let $n$ be a positive integer. 
Let $L$ and $L'$ be $(2n+{\rm lh})$-equivalent $($resp. $2n$-move equivalent$)$ 
links and $\sigma$ a string link whose closure is $L$. 
Then there exists a string link $\sigma'$ whose closure is $L'$ such that $\sigma'$ is $(2n+{\rm lh})$-equivalent $($resp. $2n$-move equivalent$)$ to $\sigma$. 
\end{lemma}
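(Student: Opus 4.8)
The plan is to mirror the proof of the link-homotopy version \cite[Lemma 2.5]{HL}, treating the generating moves of our equivalence relations one at a time. Both the $(2n+{\rm lh})$-equivalence and the $2n$-move equivalence on links in $S^{3}$ are generated by ambient isotopies together with local moves: $2n$-moves, and (for the $(2n+{\rm lh})$-version) self-crossing changes. Hence there is a finite sequence
\[
L=L_{0},\,L_{1},\,\ldots,\,L_{p}=L'
\]
in which consecutive links are related by a single ambient isotopy or a single local move. I would induct on $p$: it suffices to produce, from a string link $\sigma$ whose closure is $L_{r}$, a string link whose closure is $L_{r+1}$ and which is $(2n+{\rm lh})$-equivalent (resp. $2n$-move equivalent) to $\sigma$. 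So I may assume $L$ and $L'$ differ by a single move.

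Throughout I will use the description of a string-link structure on $L$ as an embedded disk $D$ (a cutting disk) in $S^{3}$ meeting each component of $L$ transversely in exactly one point: cutting $L$ along $D$ recovers $\sigma$, and $\sigma$ is determined up to isotopy of string links by the pair $(L,D)$ up to isotopy. If $L$ and $L'$ differ by an ambient isotopy, I transport the cutting disk $D$ along that isotopy to a disk $D'$ for $L'$. Since the isotopy carries the pair $(L,D)$ to $(L',D')$, the string link $\sigma'$ obtained by cutting $L'$ along $D'$ is ambient isotopic to $\sigma$, hence in particular $(2n+{\rm lh})$-equivalent (resp. $2n$-move equivalent) to it.

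The remaining case is a single local move $M$ (a $2n$-move or a self-crossing change) performed in a small ball $B$ meeting $L$ in one or two arcs. First I would isotope the cutting disk $D$, within the complement of $L$ and keeping it transverse to $L$, so that $D\cap B=\emptyset$: the points of $D\cap L$ lying in $B$ are slid out of $B$ along the corresponding arcs of $L$, and any remaining sheets of $D\cap B$ are pushed off $B$. Such an isotopy never crosses $L$, so by the previous paragraph it alters $\sigma$ only up to isotopy. Once $D\cap B=\emptyset$, the move $M$ takes place entirely inside the string link cut along $D$; thus the resulting $\sigma'$, whose closure is $L'$, is obtained from $\sigma$ by the single move $M$. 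A $2n$-move on $\sigma$ witnesses $2n$-move equivalence, while a self-crossing change witnesses link-homotopy, so in both versions $\sigma'$ is $(2n+{\rm lh})$-equivalent (resp. $2n$-move equivalent) to $\sigma$, completing the induction.

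The main obstacle is precisely this positioning step: arranging $D\cap B=\emptyset$ by an isotopy that never pushes $D$ across $L$, so that the string-link type is preserved up to isotopy rather than genuinely changed. This is the technical point underlying \cite[Lemma 2.5]{HL}, and the only new feature here is to verify that a $2n$-move on the closure, once localized away from $D$, is realized by an honest $2n$-move on the string link; this is immediate, since the move is local and involves two parallel strands that may be taken disjoint from the cutting disk.
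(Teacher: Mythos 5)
Your proposal is correct and follows exactly the strategy the paper intends: the paper omits the proof, stating only that it is ``strictly similar'' to that of \cite[Lemma 2.5]{HL}, and your argument---decomposing the equivalence into elementary moves, transporting the cutting disk through ambient isotopies, and localizing each $2n$-move or self-crossing change in a ball disjoint from the disk so that it becomes an honest move on the string link---is precisely that adaptation. You also correctly identify the one genuinely new point beyond \cite{HL}, namely that a localized $2n$-move on the closure is realized by a $2n$-move on the string link.
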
 

\noindent
The proof is strictly similar to that of~\cite[Lemma 2.5]{HL}, and hence we omit it. 

Let $\sigma$ be a string link. 
We define $\Delta_{\sigma}(I)$ to be the greatest common divisor of all $\mu_{\sigma}(J)$ such that $J$ is obtained from $I$ by removing at least one index and permuting the remaining indices cyclically. 
It is known in~\cite{HL} that the integer $\Delta_{\sigma}(I)$ and the residue class of $\mu_{\sigma}(I)$ modulo ${\Delta_{\sigma}(I)}$ are invariants of the closure of $\sigma$.  
For a link $L$, 
we define $\Delta_{L}^{(n)}(I)$ to be $\gcd\{\Delta_{\sigma}(I),n\}$ and 
$\omu_{L}^{(n)}(I)$ to be the residue class of $\mu_{\sigma}(I)$ modulo $\Delta_{L}^{(n)}(I)$ 
for a string link $\sigma$ whose closure is $L$. 
Obviously, $\Delta_{L}^{(n)}(I)$ and $\omu_{L}^{(n)}(I)$ are invariants of $L$. 
Moreover we have the following.

\begin{proposition}\label{prop-inv-link}
Let $L$ and $L'$ be links. 
The following {\rm (1)} and {\rm (2)} hold: 
\begin{enumerate}
\item 
Let $n$ be a positive integer. 
If $L$ and $L'$ are $(2n+{\rm lh})$-equivalent,
then $\Delta_{L}^{(n)}(I)=\Delta_{L'}^{(n)}(I)$ and 
$\omu_{L}^{(n)}(I)=\omu_{L'}^{(n)}(I)$ for any non-repeated sequence $I$. 

\item
Let $p$ be a prime number.  
If $L$ and $L'$ are $2p$-move equivalent, then $\Delta_{L}^{(p)}(I)=\Delta_{L'}^{(p)}(I)$ and 
$\omu_{L}^{(p)}(I)=\omu_{L'}^{(p)}(I)$ for any sequence $I$ of length~$\leq p$. 
\end{enumerate}
\end{proposition}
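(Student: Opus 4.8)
The plan is to reduce the statement about links to the already-established statements about string links, and then to observe that forming the greatest common divisor with $n$ (resp. $p$) converts each Milnor number into a quantity that depends only on its residue class. First I would fix a string link $\sigma$ whose closure is $L$. By Lemma~\ref{lem-2nlh}, the hypothesis that $L$ and $L'$ are $(2n+{\rm lh})$-equivalent (resp. $2p$-move equivalent) produces a string link $\sigma'$ whose closure is $L'$ and which is $(2n+{\rm lh})$-equivalent (resp. $2p$-move equivalent) to $\sigma$. This transfers the entire problem to the pair $\sigma,\sigma'$.

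Next I would record the relevant congruences among Milnor numbers. For part (1), let $I$ be non-repeated. Every sequence $J$ obtained from $I$ by deleting at least one index and cyclically permuting the remaining ones is again non-repeated, so Theorem~\ref{th-invariance} gives $\mu_{\sigma}(J)\equiv\mu_{\sigma'}(J)\pmod{n}$ for all such $J$, and likewise $\mu_{\sigma}(I)\equiv\mu_{\sigma'}(I)\pmod{n}$. For part (2), let $I$ have length $\leq p$; then each such $J$ has length $<|I|\leq p$, so Proposition~\ref{prop-inv-prime} yields the analogous congruences modulo $p$ for both the $J$ and for $I$ itself.

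Then I would carry out the elementary number-theoretic step. Writing $\Delta_{\sigma}(I)=\gcd\{\mu_{\sigma}(J)\}_{J}$, associativity of $\gcd$ gives $\Delta_{L}^{(n)}(I)=\gcd(\Delta_{\sigma}(I),n)=\gcd\bigl(\{\mu_{\sigma}(J)\}_{J}\cup\{n\}\bigr)$. The key observation is that for integers with $a\equiv a'\pmod{n}$ one has $\gcd(a,n)=\gcd(a',n)$; applying this termwise to the congruences above (note that the combinatorial index set of sequences $J$ is the same for $\sigma$ and $\sigma'$) shows $\gcd\bigl(\{\mu_{\sigma}(J)\}_{J}\cup\{n\}\bigr)=\gcd\bigl(\{\mu_{\sigma'}(J)\}_{J}\cup\{n\}\bigr)$, i.e. $\Delta_{L}^{(n)}(I)=\Delta_{L'}^{(n)}(I)$. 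Finally, since $\Delta_{L}^{(n)}(I)$ divides $n$ and $\mu_{\sigma}(I)\equiv\mu_{\sigma'}(I)\pmod{n}$, I obtain $\mu_{\sigma}(I)\equiv\mu_{\sigma'}(I)\pmod{\Delta_{L}^{(n)}(I)}$, so the residue classes $\omu_{L}^{(n)}(I)$ and $\omu_{L'}^{(n)}(I)$ agree. Part (2) then follows \emph{verbatim} with $n$ replaced by $p$.

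The argument is essentially bookkeeping, since the substantive content already lives in Theorem~\ref{th-invariance}, Proposition~\ref{prop-inv-prime}, and Lemma~\ref{lem-2nlh}. The only point deserving genuine care --- and the step I would check most carefully --- is that every sequence $J$ entering the definition of $\Delta$ satisfies the hypotheses of the string-link results: non-repeatedness in part (1) and the length bound $\leq p$ in part (2). Because deletion can only shorten $I$ and cannot introduce repetitions, both conditions hold automatically, so I do not expect any real obstacle beyond this verification.
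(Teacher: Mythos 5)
Your proof is correct and takes essentially the same route as the paper: reduce to string links via Lemma~\ref{lem-2nlh}, apply Theorem~\ref{th-invariance} (resp.\ Proposition~\ref{prop-inv-prime}), and conclude with the gcd bookkeeping $\Delta_{L}^{(n)}(I)=\gcd\{\Delta_{\sigma}(I),n\}=\gcd\{\Delta_{\sigma'}(I),n\}=\Delta_{L'}^{(n)}(I)$ together with the observation that $\Delta_{L}^{(n)}(I)$ divides $n$. Your explicit check that every derived sequence $J$ is non-repeated in part (1) and of length $<p$ in part (2) is a point the paper leaves implicit, and spelling it out is a harmless (indeed welcome) refinement rather than a deviation.
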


\begin{proof}
Let $\sigma$ be a string link whose closure is $L$. 
By Lemma~\ref{lem-2nlh}, there exists a string link $\sigma'$ whose closure is $L'$ such that $\sigma'$ is $(2n+{\rm lh})$-equivalent to $\sigma$.  
By Theorem~\ref{th-invariance},  for any non-repeated sequence $I$, $\mu_{\sigma}(I)\equiv\mu_{\sigma'}(I)\pmod{n}$.   
Therefore, 
\[
\Delta_{L}^{(n)}(I)=\gcd{\{\Delta_{\sigma}(I),n\}}=\gcd{\{\Delta_{\sigma'}(I),n\}}=\Delta_{L'}^{(n)}(I).
\] 
Since $\Delta_{L}^{(n)}(I)$ divides $n$, it follows that 
\[
\mu_{\sigma}(I)\equiv\mu_{\sigma'}(I)\pmod{\Delta_{L}^{(n)}(I)}. 
\] 
This completes the proof of Proposition~\ref{prop-inv-link}~(1). 

Using Proposition~\ref{prop-inv-prime} instead of Theorem~\ref{th-invariance}, 
Proposition~\ref{prop-inv-link}~(2) is similarly shown. 
\end{proof}

Proposition~\ref{prop-inv-link}~(1) together with Theorem~\ref{th-sl} implies the following.

\begin{theorem}\label{th-link}
Let $n$ be a positive integer, and  
let $L$ and $L'$ be $m$-component links. 
Assume that $\Delta_{L}^{(n)}(I)=\Delta_{L'}^{(n)}(I)=n$ for any non-repeated sequence $I$ of length~$m$.
Then, $L$ and $L'$ are $(2n+{\rm lh})$-equivalent 
if and only if $\omu_{L}^{(n)}(I)=\omu_{L'}^{(n)}(I)$ for any non-repeated sequence~$I$ of length~$m$. 
\end{theorem}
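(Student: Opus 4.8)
The plan is to prove the two implications separately, reducing the backward direction to the string-link classification of Theorem~\ref{th-sl}. The forward implication requires no new work: if $L$ and $L'$ are $(2n+{\rm lh})$-equivalent, then Proposition~\ref{prop-inv-link}~(1) already gives $\omu_L^{(n)}(I)=\omu_{L'}^{(n)}(I)$ for every non-repeated sequence $I$, and in particular for those of length $m$. So all the content is in the converse.

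For the converse, I would first fix \emph{arbitrary} string links $\sigma$ and $\sigma'$ whose closures are $L$ and $L'$, respectively, and aim to show that $\mu_\sigma(J)\equiv\mu_{\sigma'}(J)\pmod{n}$ for every non-repeated sequence $J$. Theorem~\ref{th-sl} then yields that $\sigma$ and $\sigma'$ are $(2n+{\rm lh})$-equivalent, and applying the closure operation, which carries $2n$-moves, self-crossing changes and ambient isotopies of string links to the corresponding moves on links, makes $L$ and $L'$ $(2n+{\rm lh})$-equivalent. The required congruences split according to the length of $J$. For $|J|=m$, the hypothesis $\Delta_L^{(n)}(I)=n$ means $n\mid\Delta_\sigma(I)$, so the residue class of $\mu_\sigma(I)$ modulo $\Delta_\sigma(I)$ determines $\mu_\sigma(I)\bmod n$; thus $\omu_L^{(n)}(I)$ is literally $\mu_\sigma(I)\bmod n$, independent of the chosen representative, and similarly for $L'$. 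The assumed equality $\omu_L^{(n)}(I)=\omu_{L'}^{(n)}(I)$ therefore gives $\mu_\sigma(I)\equiv\mu_{\sigma'}(I)\pmod{n}$.

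For $2\le|J|<m$ the point I would emphasize is that every such non-repeated sequence arises, up to the allowed deletion-and-cyclic-permutation, inside some non-repeated sequence $I$ of length $m$: one lists the entries of $J$ first and appends the remaining indices of $\{1,\dots,m\}$, so that $J$ is a deletion of $I$. Since $\Delta_\sigma(I)$ divides $\mu_\sigma(J)$ by definition of the greatest common divisor, and $n\mid\Delta_\sigma(I)$ by hypothesis, we obtain $n\mid\mu_\sigma(J)$; the identical argument applied to $\sigma'$ (using $\Delta_{L'}^{(n)}(I)=n$) gives $n\mid\mu_{\sigma'}(J)$, so both sides vanish modulo $n$. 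Together with the trivial case $|J|=1$, where $\mu\equiv0$, and the length-$m$ case above, this establishes the congruence for all non-repeated sequences, completing the reduction.

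The step I expect to be the main obstacle is precisely this passage from the indeterminate link invariants $\omu_L^{(n)}$ to honest integer congruences between the $\mu$-invariants of the chosen string-link representatives. One must verify that the single top-length hypothesis $\Delta_L^{(n)}(I)=n$ does double duty: it rigidifies the length-$m$ residues so that they no longer depend on the representative, \emph{and} it forces every shorter Milnor number to be divisible by $n$. Once this bookkeeping of the indeterminacy is secured, the remaining steps are formal applications of Theorem~\ref{th-sl} and of the naturality of the closure operation, so I would keep those brief.
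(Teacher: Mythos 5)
Your proposal is correct and follows essentially the same route as the paper's proof: the ``only if'' direction via Proposition~\ref{prop-inv-link}~(1), and the ``if'' direction by choosing string-link representatives $\sigma,\sigma'$, using $\Delta_{L}^{(n)}(I)=\Delta_{L'}^{(n)}(I)=n$ to force $\mu_\sigma(J)\equiv\mu_{\sigma'}(J)\equiv0\pmod{n}$ for all non-repeated $J$ of length $<m$ and the equality $\omu_{L}^{(n)}(I)=\omu_{L'}^{(n)}(I)$ for the length-$m$ congruences, then invoking Theorem~\ref{th-sl} and passing to closures. The bookkeeping you highlight (extending a short non-repeated sequence to one of length $m$ so that the divisibility follows from the definition of $\Delta_\sigma$, and the representative-independence of the length-$m$ residues) is exactly what the paper's proof leaves implicit.
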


\begin{proof}
Since the ``only if'' part directly follows from Proposition~\ref{prop-inv-link}~(1),  
it is enough to show the ``if'' part. 
Let $\sigma$ and $\sigma'$ be string links whose closures are $L$ and $L'$, respectively. 
Since $\Delta_{L}^{(n)}(I)=\Delta_{L'}^{(n)}(I)=n$ for any non-repeated sequence $I$ of length~$m$, 
it follows that 
\[
\mu_{\sigma}(J)\equiv\mu_{\sigma'}(J)\equiv0\pmod{n}
\] 
for any non-repeated sequence $J$ of length~$< m$. 
Furthermore, since $\omu_{L}^{(n)}(I)=\omu_{L'}^{(n)}(I)$ for any non-repeated sequence $I$ of length~$m$, we have
\[
\mu_{\sigma}(I)\equiv\mu_{\sigma'}(I)\pmod{n}.
\]
Therefore, $\sigma$ and $\sigma'$ are $(2n+{\rm lh})$-equivalent by Theorem~\ref{th-sl}. 
This completes the proof. 
\end{proof}

As a consequence of Theorem~\ref{th-link}, we have the following.

\begin{corollary}\label{cor-trivial} 
Let $n$ be a positive integer.
An $m$-component link $L$ is $(2n+{\rm lh})$-equivalent to the trivial link if and only if $\Delta_{L}^{(n)}(I)=n$ and 
$\omu_{L}^{(n)}(I)=0$ for any non-repeated sequence~$I$ of length~$m$. 
\end{corollary}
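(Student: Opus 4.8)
The plan is to derive the statement directly from Theorem~\ref{th-link} by specializing $L'$ to the trivial $m$-component link, so the first order of business is to evaluate the invariants $\Delta^{(n)}(I)$ and $\omu^{(n)}(I)$ on the trivial link. The trivial link is the closure of the trivial string link $\mathbf{1}_{m}$, and every longitude of $\mathbf{1}_{m}$ is trivial, so $\mu_{\mathbf{1}_{m}}(J)=0$ for every sequence $J$. Hence $\Delta_{\mathbf{1}_{m}}(I)$, defined as the greatest common divisor of a family all of whose members are $0$, equals $0$; consequently $\Delta^{(n)}(I)=\gcd\{0,n\}=n$ and $\omu^{(n)}(I)=0$ for the trivial link and for every non-repeated sequence $I$.

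With this in hand, the ``only if'' direction is immediate: if $L$ is $(2n+{\rm lh})$-equivalent to the trivial link, then Proposition~\ref{prop-inv-link}~(1), applied to $L$ and the trivial link, gives $\Delta_{L}^{(n)}(I)=n$ and $\omu_{L}^{(n)}(I)=0$ for every non-repeated sequence $I$, in particular for those of length $m$.

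For the ``if'' direction, assume $\Delta_{L}^{(n)}(I)=n$ and $\omu_{L}^{(n)}(I)=0$ for every non-repeated sequence $I$ of length $m$. The equality $\Delta_{L}^{(n)}(I)=\Delta^{(n)}(I)=n$ (the second invariant being that of the trivial link, computed above) is precisely the hypothesis of Theorem~\ref{th-link} for the pair $L$ and the trivial link. That theorem then asserts that $L$ and the trivial link are $(2n+{\rm lh})$-equivalent if and only if $\omu_{L}^{(n)}(I)$ agrees with the corresponding invariant of the trivial link for every non-repeated sequence $I$ of length $m$; since the latter is $0$ and we have assumed $\omu_{L}^{(n)}(I)=0$, the equivalence follows. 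The argument carries no genuine obstacle, being a substitution of $L'=$ trivial link into already-proven results, and the only place demanding a moment of care is the evaluation of $\Delta^{(n)}(I)$ on the trivial link, where one must invoke the convention that the greatest common divisor of the all-zero family is $0$ in order to obtain $\Delta^{(n)}(I)=n$ and thereby meet the hypothesis of Theorem~\ref{th-link}.
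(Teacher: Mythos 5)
Your proposal is correct and follows essentially the same route as the paper, whose proof is exactly the one-line deduction from Proposition~\ref{prop-inv-link}~(1) and Theorem~\ref{th-link} with $L'$ the trivial link. Your explicit verification that the trivial link satisfies $\Delta^{(n)}(I)=\gcd\{0,n\}=n$ and $\omu^{(n)}(I)=0$ (via the convention that the gcd of an all-zero family is $0$) simply spells out the detail the paper leaves implicit.
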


\begin{proof} 
This follows from Proposition~\ref{prop-inv-link}~(1) 
and Theorem~\ref{th-link}. 
\end{proof}



\begin{thebibliography}{99}
\bibitem{ABMW} B. Audoux, P. Bellingeri, J.-B. Meilhan, E. Wagner, {\it Homotopy classification of ribbon tubes and welded string links}, Ann. Sc. Norm. Super. Pisa Cl. Sci. (5) {\bf 17} (2017) 713--761. 

\bibitem{AMW} B. Audoux, J.-B. Meilhan, E. Wagner, {\it On codimension two embeddings up to link-homotopy}, J. Topol. {\bf 10} (2017) 1107--1123.

\bibitem{Casson} A. J. Casson, {\it Link cobordism and Milnor's invariant}, Bull. London Math. Soc. {\bf 7} (1975) 39--40.

\bibitem{C} K. T. Chen, {\it Commutator calculus and link invariants}, Proc. Amer. Math. Soc. {\bf 3} (1952) 44--55. 

\bibitem{CST1} J. Conant, R. Schneiderman, P. Teichner, 
{\it Higher-order intersections in low-dimensional topology}, Proc. Natl. Acad. Sci. USA {\bf 108} (2011) 8131--8138.

\bibitem{CST2} J. Conant, R. Schneiderman, P. Teichner, {\it Whitney Tower concordance of classical links}, Geom. Topol. {\bf 16} (2012) 1419--1479. 

\bibitem{CST3} J. Conant, R. Schneiderman, P. Teichner,  
{\it Milnor invariants and twisted Whitney towers}, J. Topol. {\bf 7} (2014) 187--224.

\bibitem{DP02} M. K. D\c{a}bkowski, J. H. Przytycki, {\it Burnside obstructions to the Montesinos-Nakanishi $3$-move conjecture}, Geom. Topol. {\bf 6} (2002) 355--360.

\bibitem{DP04} M. K. Dabkowski, J. H. Przytycki, {\it Unexpected connections between Burnside groups and knot theory}, Proc. Natl. Acad. Sci. USA {\bf 101} (2004) 17357--17360. 

\bibitem{Fox} R. H. Fox, {\it Congruence classes of knots}, Osaka Math. J. {\bf 10} (1958) 37--41. 

\bibitem{F} R. Fenn, {\it Techniques of geometric topology}, London Mathematical Society Lecture Note Series {\bf 57} (Cambridge University Press, Cambridge, 1983). 


\bibitem{FY} T. Fleming, A. Yasuhara, {\it Milnor's invariants and self $C_{k}$-equivalence}, Proc. Amer. Math. Soc. {\bf 137} (2009) 761--770.

\bibitem{HL} N. Habegger, X.-S. Lin, {\it The classification of links up to link-homotopy}, J. Amer. Math. Soc. {\bf 3} (1990) 389--419.

\bibitem{H} K. Habiro, {\it Claspers and finite type invariants of links}, Geom. Topol. {\bf 4} (2000) 1--83.

\bibitem{K57} S. Kinoshita, {\it On Wendt's theorem of knots}, Osaka Math. J. {\bf 9} (1957) 61--66. 

\bibitem{K80} S. Kinoshita, {\it On the distribution of Alexander polynomials of alternating knots and links}, Proc. Amer. Math. Soc. {\bf 79} (1980) 644--648.

\bibitem{La} M. Lackenby, {\it Fox's congruence classes and the quantum-$SU(2)$ invariants of links in $3$-manifolds}, Comment. Math. Helv. {\bf 71} (1996)  664--677. 

\bibitem{L} J. P. Levine, {\it An approach to homotopy classification of links}, Trans. Amer. Math. Soc. {\bf 306} (1988) 361--387.

\bibitem{MY10} J.-B. Meilhan, A. Yasuhara, {\it Characterization of finite type string link invariants of degree $<5$}, 
Math. Proc. Cambridge Philos. Soc. {\bf 148} (2010) 439--472. 

\bibitem{MY12} J.-B. Meilhan, A. Yasuhara, {\it Milnor invariants and the HOMFLYPT polynomial}, Geom. Topol. {\bf 16} (2012) 889--917.

\bibitem{MY19} J.-B. Meilhan, A. Yasuhara, {\it Arrow calculus for welded and classical links}, Algebr. Geom. Topol. {\bf 19} (2019) 397--456. 

\bibitem{M54} J. Milnor, {\it Link groups}, Ann. of Math. (2) {\bf 59} (1954) 177--195.

\bibitem{M57} J. Milnor, {\it Isotopy of links}, from: Algebraic geometry and topology; A Symposium in Honor of S.~Lefschetz  
(Princeton University Press, Princeton, NJ, 1957) 280--306.

\bibitem{N} Y. Nakanishi, {\it On Fox's congruence classes of knots. II}, Osaka J. Math. {\bf 27} (1990) 207--215. 

\bibitem{NS} Y. Nakanishi, S. Suzuki, {\it On Fox's congruence classes of knots}, Osaka J. Math. {\bf 24} (1987)  217--225. 


\bibitem{P} J. H. Przytycki, {\it $t_{k}$ moves on links}, from: Braids (Santa Cruz, CA, 1986), Contemp. Math. {\bf 78} (Amer. Math. Soc., Providence, RI, 1988) 615--656. 

\bibitem{S} J. Stallings, {\it Homology and central series of groups}, J. Algebra {\bf 2} (1965) 170--181. 

\bibitem{TKS} Y. Takabatake, T. Kuboyama, H. Sakamoto, {\it stringcmp: Faster calculation for Milnor invariant}, 
available at https://code.google.com/archive/p/stringcmp/  

\bibitem{Yagt} A. Yasuhara, {\it Classification of string links up to self delta-moves and concordance}, Algebr. Geom. Topol. {\bf 9} (2009)  265--275. 

\bibitem{Ytrans} A. Yasuhara, {\it Self delta-equivalence for links whose Milnor's isotopy invariants vanish}, Trans. Amer. Math. Soc. {\bf 361} (2009) 4721--4749.

\end{thebibliography}
\end{document}